\definecolor{alert}{rgb}{0.8,0,0}
\newcommand{\R}{\mathbb{R}}
\newcommand{\s}{\mathbb{S}}
\newcommand{\h}{\mathbb{H}}
\newcommand{\M}{\mathbb{M}}
\newcommand{\grad}{\mathrm{grad}}
\newcommand{\olz}{\overline{z}}
\newcommand{\mrf}{\mathbb{R}\times_f\mathbb{M}^2(\kappa)}
\newcommand{\rrf}{\mathbb{R} \times_f \mathbb{R}^{2}}
\newcommand{\abs}[1]{\left\lvert #1 \right\rvert}
\newtheorem{theorem}{Theorem}[section]
\newtheorem{lemma}[theorem]{Lemma}
\theoremstyle{definition}
  \newtheorem{definition}[theorem]{Definition}
\theoremstyle{remark}
\newtheorem{remark}[theorem]{Remark}
\numberwithin{equation}{section}
\title[Height Estimates for surfaces with some constant curvature in \texorpdfstring{$\rrf$}{rrf}]{HEIGHT ESTIMATES FOR SURFACES WITH SOME CONSTANT CURVATURE IN \texorpdfstring{$\rrf$}{rrf}}
\dedicatory{Dedicated to memory of Mikhail Armenovich Malakhaltsev.}
\author{Jairo Delgado}
\address{Universidad Nacional de Colombia, Sede Palmira, Valle del Cauca-Colombia}
\email{jaadelgadoos@unal.edu.co.}
\address{Jairo Delgado is presently a part-time Professor (Profesor hora c\'atedra) at the Universidad del Valle, Sede Buga}
\email{jairo.delgado@correounivalle.edu.co}
\author{Haimer A. Trejos}
\address{Universidade do Estado do Rio de Janeiro, Rua S\~{a}o Francisco Xavier, 524 - 6 andar - Maracan\~{a},  Rio de Janeiro, Brazil}
\email{alexander.serna@ime.uerj.br}
\author{Carlos Pe\~{n}afiel}
\address{ Instituto de Matem\'{a}tica, Universidade Federal de Rio de Janeiro, Rio de Janeiro, Brazil, 21941-909}
\email{penafiel@im.ufrj.br}
\subjclass[2000]{Primary 53C42; Secondary 53C30}
\keywords{Classification theorem, height estimates, graphs, immersed surfaces, conformal parameters.}
\begin{document}

\begin{abstract}

In this paper, we obtain the necessary equations in a conformal parameter induced by the first or second fundamental forms for a surface that is isometrically immersed in the warped product $\mrf$ where $\mathbb{M}^{2}(\kappa)$ denotes the complete, connected, simply connected, two-dimensional space form of constant curvature. The surface we will consider has either positive extrinsic curvature or positive mean curvature. In each case, we carry out some geometric applications to the theory of constant curvature surfaces immersed in $\rrf$ under certain conditions on the warping function $f$. Specifically, we derive height estimates for graph-type surfaces with either positive constant extrinsic curvature or positive constant mean curvature. In particular, we classify compact minimal graphs in such warped products. This article extends previous work on the study of constant curvature surfaces immersed in product spaces using conformal parameters, as well as the height estimates for constant curvature surfaces in the warped product $\rrf$.

 \end{abstract}

\maketitle

\section{Introduction} \label{intro}

The study of minimal surfaces and constant mean curvature surfaces is one of the most widely researched topics in differential geometry today, as many of their results characterize these classes of surfaces analytically, geometrically, or topologically. Additionally, these surfaces have the interesting property of being critical points of functionals that have been extensively studied in recent decades. Due to their nature as surfaces, we can also define special parameters that allow us to study these classes of surfaces in greater detail. For example, Heinz Hopf in \cite{Ho} used this idea to define a holomorphic quadratic differential, known as the Hopf differential, which has the property of being holomorphic when the surface has constant mean curvature. Moreover, the zeros of this differential coincide with the umbilical points of the surface. With these facts, Hopf was able to prove the well-known Hopf theorem, which characterizes any constant mean curvature surface immersed in $\mathbb{R}^{3}$
with genus zero as the round sphere. It is important to mention that the argument used in the proof of Hopf's theorem can be generalized analogously to 3-manifolds of constant curvature, as the proof fundamentally depends on the special parameters considered on the surface and the type of umbilical surfaces in these spaces.

Subsequently, by using the conformal structure of the surface, T. K. Milnor in \cite{TK1}, studied the theory of Codazzi pairs and defined holomorphic quadratic differentials in an abstract sense. This, in particular, allowed  the extension of all the work done by Hopf to study quadratic differentials that depend solely on intrinsic properties of the surface, enabling the study of surfaces with curvature constraints that are independent of the ambient space in which these surfaces are immersed. Specifically, these ideas were used by Aledo, Galvez, and Espinar in \cite{AEG2} to generalize Hopf's theorem in an intrinsic sense, thereby achieving various applications to the theory of surfaces in different ambient spaces, such as the uniqueness of spheres in certain geometric problems and height estimates for compact graphs in $\mathbb{R}^{3}$. Following this idea, in \cite{AEG} the aforementioned authors calculated the necessary equations in conformal parameters for a surface to be immersed in a product space $\mathbb{M}^{2}(\kappa) \times \mathbb{R}$. Thus, they were able to derive vertical height estimates for compact constant mean curvature graphs in these ambient spaces. This height estimate is optimal because they showed the existence of rotational spheres that achieve this height and for which the Abresch-Rosenberg differential vanishes (see \cite{AR}).
Besides, in \cite{EGR} wrote the necessary equations for a surface of positive extrinsic curvature immersed in the product spaces $\mathbb{M}^{2}(\kappa) \times \mathbb{R}$. With this, they were able to make several applications to the theory of surfaces in these spaces, among which they calculated vertical and horizontal height estimates for compact graphs with constant positive extrinsic curvature and characterized the complete surfaces of constant extrinsic curvature as rotational spheres. Additional works on the characterization of constant mean curvature surfaces and extrinsic curvature surfaces in homogeneous spaces using the conformal structure of the surface have also been explored in several references (see \cite{ER,ER2}).

Finally, by using methods different from the conformal structure of the surface, height estimates are made for surfaces immersed in more general spaces or under other curvature conditions. For example in \cite{GIR}, the authors have used the maximum principle to derive height estimates for surfaces immersed in warped products, assuming that the symmetric functions of the principal curvatures (also called k-curvatures) are constant. In particular, they provide height estimates for compact graphs with constant mean curvature. Similarly, in \cite{M}, the author derives height estimates for compact elliptic graphs that are special Weingarten surfaces in product spaces $\mathbb{M}^{2}(\kappa) \times \mathbb{R}$ , using a maximum principle for this class of surfaces.

In this article, we compute the necessary equations in conformal parameters induced by the first or second fundamental forms for a surface which is immersed in the warped product $\mrf$, by using the immersion equations already calculated in \cite{ES1} and presented in a more suitable way for us in \cite{CFP}. Subsequently, we apply these results to the theory of constant curvature surfaces in the warped product $\rrf$, such as height estimates and the classification of compact minimal graphs in this warped product. It is important to note that this work extends the estimates made in \cite{AEG} and \cite{EGR} to the warped products and those in \cite{FPS} and \cite{GIR} for a broader class of warping functions.

This paper is organized as follows: In Section 2, we state the definitions and basic facts about the warped product $\mrf$ that will be used in this article and the theory of surfaces in these spaces. In Section 3, we write the necessary equations in the conformal parameters induced by the first or the second fundamental forms, along with the assumptions required to perform these calculations. Finally, in Section 4, we carry out the aforementioned applications in the warped product $\rrf$. For example, we have the following main theorems.

\nonumber\begin{theorem} 
Let $\Sigma \looparrowright \mathbb{R} \times_{f} \mathbb{R}^{2} $  be a compact graph in a domain of $\mathbb{R}^{2}$ where $f$ is a non-negative warping function, such that the first derivative $f'$ is non-positive and the second derivative $f''$ is non-negative. Suppose that $\Sigma$  has a positive constant extrinsic curvature $K_{e}$ and the boundary $\partial \Sigma$ is a subset of the slice $\{ 0 \} \times \mathbb{R}^{2}$. Then, the height function $h: \Sigma \rightarrow \mathbb{R}$ satisfies
\begin{equation*}
    h(p) \leq \frac{e^{f(0)}}{\sqrt{K_{e}}}
\end{equation*}
for each $p \in \Sigma$.  
\end{theorem}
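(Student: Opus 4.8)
The plan is to derive a height estimate by constructing a suitable quadratic differential (or using the conformal-parameter machinery developed in Section 3) for the surface $\Sigma$ with positive constant extrinsic curvature $K_e$, and then to compare the surface against the vertical geodesics of the warped product $\rrf$. The overall strategy mimics the classical approach of Aledo–Espinar–Gálvez and Espinar–Gálvez–Rosenberg, but now adapted to the warping function $f$ with the sign conditions $f\ge 0$, $f'\le 0$, $f''\ge 0$. The ambient metric is $\df s^2 = \df t^2 + e^{2f(t)}\bigl(\df x^2 + \df y^2\bigr)$ (equivalently, the warping prefactor is $e^{f}$, which is why the bound $e^{f(0)}/\sqrt{K_e}$ appears), and the slice $\{0\}\times\R^2$ is totally geodesic precisely when $f'(0)=0$; in general its mean curvature is governed by $f'$.

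\smallskip

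\noindent First I would set up the geometry concretely. Let $\partial_t$ be the unit vertical field, let $h$ denote the height function (the $t$-coordinate restricted to $\Sigma$), and let $\nu$ be the unit normal of the graph. Since $\Sigma$ is a graph with $K_e>0$, it is locally strictly convex, so the second fundamental form is definite and one may take the conformal parameter induced by the \emph{second} fundamental form, as prepared in Section 3. The key analytic object is the angle function and the height function: I would write down the gradient and Laplacian (or Hessian) of $h$ with respect to this conformal structure, using the immersion equations from \cite{ES1,CFP}. The warping enters through the term involving $f'(h)$, and the sign hypotheses $f'\le 0$, $f''\ge 0$ are exactly what is needed to make the relevant differential operator applied to an appropriate test function (typically something like $e^{f(h)}\,\langle\nu,\partial_t\rangle$ or a combination of the height and the angle) have a definite sign, yielding a subharmonic or superharmonic comparison.

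\smallskip

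\noindent The core of the argument is then a maximum-principle / comparison step. I would build a function $\phi$ on $\Sigma$, of the form (roughly) $\phi = e^{f(h)}\langle \nu,\partial_t\rangle + \sqrt{K_e}\,\psi(h)$ for a carefully chosen primitive $\psi$, show that its Laplacian in the second-fundamental-form conformal metric is non-positive (using $K_e$ constant together with the sign of $f'$ and $f''$), and conclude by the maximum principle that $\phi$ attains its extrema on $\partial\Sigma$. Evaluating the boundary condition $\partial\Sigma\subset\{0\}\times\R^2$ and unwinding the definition of $\phi$ then produces the pointwise bound $h(p)\le e^{f(0)}/\sqrt{K_e}$. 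The normalization at $t=0$ is what converts the intrinsic estimate into the explicit constant $e^{f(0)}$.

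\smallskip

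\noindent \textbf{Main obstacle.} The delicate point I expect is verifying that the chosen test function $\phi$ is genuinely sub/superharmonic for the \emph{whole} admissible range of $f$: one must control cross terms in $\hess h$ and in the derivative of the angle function, and show that the contributions weighted by $f'$ and $f''$ all cooperate in sign rather than cancel. This is where the specific hypotheses $f\ge 0,\ f'\le 0,\ f''\ge 0$ must be used in a combined, not merely termwise, fashion; getting the primitive $\psi$ and the exponential weight $e^{f(h)}$ to match so that the $K_e$-term closes the inequality is the real work. A secondary subtlety is ensuring the conformal parameter induced by the second fundamental form is globally well defined (which follows from $K_e>0$ forcing strict convexity, hence a definite second fundamental form with no umbilic degeneracies of the wrong type), so that the maximum principle applies on all of $\Sigma$.
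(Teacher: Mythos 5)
Your plan is essentially the paper's proof: working in the conformal parameter induced by the second fundamental form, the paper applies the maximum principle to the function $h + e^{f}\nu/\sqrt{K_{e}}$, which is exactly your $\phi = e^{f(h)}\nu + \sqrt{K_{e}}\,\psi(h)$ with $\psi(h)=h$, up to the overall factor $\sqrt{K_{e}}$. The sign verification you flag as the main obstacle is precisely what the paper carries out via Lemma~\ref{13} and equation~\eqref{e13}: each term of $\bigl(h + e^{f}\nu/\sqrt{K_{e}}\bigr)_{z\overline{z}}$ is shown to be nonnegative using $\nu\le 0$, $f\ge 0$, $f'\le 0$, $f''\ge 0$, $D<0$ and $F>0$, after which the boundary condition, $\lvert\nu\rvert\le 1$ and $f(h)\le f(0)$ yield the stated bound.
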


\begin{theorem}
Let $\Sigma \looparrowright \mathbb{R} \times_{f} \mathbb{R}^{2} $  be a compact graph on an domain of $\mathbb{R}^{2}$ where $f$ is a non-negative warping function, such that $f'$ is non-positive and $f''$ is non-negative. Moreover, suppose that $\Sigma$ has a positive constant mean curvature $H$ and that the boundary $\partial \Sigma$ is contained in the slice $\mathbb{R}^{2} \times \{0 \}$. Then, the height function $h: \Sigma \rightarrow \mathbb{R}$ satisfies
\begin{equation*}
h(p) \leq  \frac{e^{f(0)}}{H}
\end{equation*}
for each $p \in \Sigma$.
\end{theorem}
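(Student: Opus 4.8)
The plan is to run a maximum-principle argument built from the closed conformal vector field attached to the warping. Write $\varphi:=e^{f}$, so that the ambient metric on $\rrf$ is $\df t^{2}+\varphi^{2}(t)\,g_{0}$ with $g_{0}$ flat, and set $V:=\varphi\,\partial_{t}$. A direct computation with the warped-product connection shows that $V$ is closed conformal, $\bar\nabla_{X}V=\varphi'\,X$ for every $X$. On $\Sigma$ I orient by the unit normal $N$ for which the angle function $\nu:=\prodesc{N}{\partial_{t}}$ is negative; since $\Sigma$ is a graph, $\nu$ never vanishes and so keeps this sign globally, and this is the orientation for which the mean curvature of the graph is the positive constant $H$ (that $\nu<0$ at an interior maximum of $h$ is forced by $\Delta_{\Sigma}h\le 0$ there). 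I then introduce the support function $\eta:=\prodesc{N}{V}=\varphi(h)\,\nu$ and the primitive $F:=\int_{0}^{h}\varphi(s)\,\df s$, and record that $\grad_{\Sigma}F=V^{\top}$, the tangential part of $V$.

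First I would compute the two Laplacians. Using $\bar\nabla_{X}V=\varphi'X$ together with the Gauss and Weingarten formulas one gets
\begin{equation*}
\Delta_{\Sigma}F=\div_{\Sigma}V^{\top}=2\varphi'(h)+2H\eta .
\end{equation*}
For the support function, $\grad_{\Sigma}\eta=-A\,V^{\top}$, so $\Delta_{\Sigma}\eta=-\div_{\Sigma}(A\,V^{\top})$; expanding this with the Codazzi identity in the form $\div_{\Sigma}A=\grad_{\Sigma}(2H)-\mathrm{Ric}(N)^{\top}$ (here $\mathrm{Ric}$ is the ambient Ricci tensor and $H$ is constant) yields
\begin{equation*}
\Delta_{\Sigma}\eta=\mathrm{Ric}(N,V^{\top})-2H\varphi'(h)-\abs{A}^{2}\eta .
\end{equation*}
The one genuinely space-dependent ingredient is $\mathrm{Ric}(N,V^{\top})$. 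Plugging the O'Neill formulas for the Ricci tensor of $\rrf$ (with flat fibre) into $V^{\top}=V-\eta N$ and simplifying, every term proportional to $\varphi''$ combines and one is left with
\begin{equation*}
\mathrm{Ric}(N,V^{\top})=\nu(1-\nu^{2})\,\frac{(\varphi')^{2}-\varphi\varphi''}{\varphi}=-\nu(1-\nu^{2})\,f''\,e^{f},
\end{equation*}
using the elementary identity $(\varphi')^{2}-\varphi\varphi''=-f''e^{2f}$ valid for $\varphi=e^{f}$.

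Next I combine. For $\Psi:=HF+\eta$ the first derivatives of $\varphi$ cancel and, writing $\abs{A}^{2}=4H^{2}-2K_{e}$,
\begin{equation*}
\Delta_{\Sigma}\Psi=\eta\,(2H^{2}-\abs{A}^{2})+\mathrm{Ric}(N,V^{\top})=2\eta\,(K_{e}-H^{2})-\nu(1-\nu^{2})\,f''\,e^{f}.
\end{equation*}
Both summands are nonnegative: $\eta<0$ and $K_{e}\le H^{2}$ (the arithmetic--geometric mean inequality on the principal curvatures) make the first $\ge0$, while $\nu<0$ and $f''\ge0$ make the second $\ge0$. Hence $\Psi$ is subharmonic, and since $\Sigma$ is compact the maximum principle gives $\max_{\Sigma}\Psi=\max_{\partial\Sigma}\Psi$. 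On $\partial\Sigma\subset\{h=0\}$ we have $F=0$ and $\Psi=\varphi(0)\nu<0$, so $\Psi\le 0$ on all of $\Sigma$, i.e. $HF(h)\le-\eta=\varphi(h)(-\nu)\le\varphi(h)$ pointwise, because $-\nu\in(0,1]$. Finally, at a point with $h\ge0$ the hypothesis $f\ge0$ gives $\varphi\ge1$, hence $F(h)=\int_{0}^{h}\varphi\ge h$, while $f'\le0$ gives $\varphi(h)\le\varphi(0)=e^{f(0)}$; chaining these, $Hh\le HF(h)\le\varphi(h)\le e^{f(0)}$, which is the asserted estimate (trivially true where $h<0$).

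The main obstacle is the second step: obtaining $\Delta_{\Sigma}\eta$ with the correct signs, which hinges on the precise sign conventions in Codazzi and on the delicate cancellation of the $\varphi''$-terms inside $\mathrm{Ric}(N,V^{\top})$, a slip in which flips the subharmonicity and breaks the whole argument; fixing the global orientation so that $H>0$ corresponds to $\nu<0$ is the other point requiring care. It is worth emphasizing that the three hypotheses on $f$ enter at three distinct places — $f''\ge0$ for the sign of the Ricci term, $f\ge0$ to pass from $F(h)$ to $h$, and $f'\le0$ to pass from $\varphi(h)$ to $\varphi(0)$ — and that the monotonicity $\varphi'\le 0$ in fact already forces the sharper comparison $F(h)\ge h\varphi(h)$, suggesting the constant can be improved.
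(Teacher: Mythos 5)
Your argument is correct in its core computations, but it takes a genuinely different route from the paper's. The paper works with a conformal parameter $z$ for the first fundamental form: Lemma \ref{15} computes $g_{z\overline{z}}$ for $g(\nu)=\tfrac{1}{H}e^{f}\nu$, and this is meant to be combined with \eqref{he7} to show that $\phi = h + \tfrac{1}{H}e^{f}\nu$ is subharmonic for the induced metric under the sign hypotheses on $f$ (the paper actually omits these details entirely), after which the maximum principle and $f'\le 0$ give $h\le -e^{f(h)}\nu/H\le e^{f(0)}/H$. You instead run the closed-conformal-vector-field machinery: $V=e^{f}\partial_t$, the primitive $F=\int_0^h e^{f(s)}\,\df s$ with $\grad_\Sigma F=V^{\top}$, the support function $\eta=\prodesc{N}{V}=e^{f(h)}\nu$, the divergence form of the Codazzi equation, and the warped-product Ricci tensor, producing the subharmonic function $\Psi=HF+\eta$. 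I checked the three key identities: $\Delta_\Sigma F=2\varphi'+2H\eta$; $\Delta_\Sigma\eta=\mathrm{Ric}(N,V^{\top})-2H\varphi'-|A|^{2}\eta$ for constant $H$ (the sign of the Ricci term is consistent with the paper's curvature convention \eqref{wc1}); and $\mathrm{Ric}(N,V^{\top})=-\nu(1-\nu^{2})f''e^{f}$ for the flat fibre. These, together with $K_e\le H^{2}$, $\eta\le 0$, $f''\ge 0$ and the boundary comparison, are all correct. This is essentially the method of \cite{GIR}, which the paper cites precisely as the alternative to conformal parameters; its advantages are that it requires no conformal parameter at all, it generalizes to higher dimensions, and, as you observe at the end, the monotonicity $\varphi'\le 0$ gives $F(h)\ge h\,e^{f(h)}$ and hence the strictly sharper bound $h\le 1/H\le e^{f(0)}/H$.

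One step needs repair: the parenthetical claim that $\nu<0$ (for the orientation making $H>0$) ``is forced by $\Delta_{\Sigma}h\le 0$ at an interior maximum.'' In the warped setting this does not close: from \eqref{33} one gets $\Delta_\Sigma h = 2H\nu + f'(1+\nu^{2})$, so at an interior maximum (where $\nu^{2}=1$) the inequality $\Delta_\Sigma h\le 0$ only yields $H\nu\le -f'(h_{\max})$; since $-f'\ge 0$ under your hypotheses, the possibility $\nu=+1$ with $H\le -f'(h_{\max})$ is not excluded. (It is excluded exactly in the product case $f'\equiv 0$, which is where that argument comes from; note that in these warped products the slices themselves have constant mean curvature $-f'>0$ with respect to the \emph{upward} normal.) So $\nu\le 0$ must either be assumed as the orientation convention for the graph or be proved under the stated hypotheses on $f$. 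To be fair, this gap is shared with, indeed inherited from, the paper: its proof of Theorem \ref{HET} simply asserts ``Since $\nu\le 0$\dots'' without justification, and for the present theorem it supplies no proof at all beyond Lemma \ref{15}. But your stated justification is not valid as written and should be replaced by an explicit hypothesis or a correct argument.
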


\section{Preliminaries}\label{preliminaries} 

In this section, we will introduce the necessary concepts for studying surfaces with constant curvature immersed in $\mrf$. To do so, we will follow references \cite{AEG2} \cite{CFP}, \cite{BO}, \cite{DC}, and the references therein.

\subsection{The warped product \texorpdfstring{$\mrf$}{mrf}}

Let $\mathbb{M}^2(\kappa)$ be the complete, simply connected, two-dimensional space form having constant Gaussian curvature $\kappa, \kappa\in \{-1,0,1\}$. Thus, we have the following cases:
\begin{description}
    \item[(a)] $\mathbb{M}^2(-1)=\h^2$, where $\h^2$ denotes the hyperbolic space.
    \item[(b)]  $\mathbb{M}^2(0)=\R^2$, where $\R^2$ denotes the Euclidean space.
    \item[(c)]  $\mathbb{M}^2(1)=\s^2$, where $\s^2$ is the Euclidean unit sphere.
\end{description}

Let $(x,y)$ be the coordinates of $\mathbb{M}^2(\kappa)$ which is endowed with the metric $d\sigma^2_\kappa=\lambda^2(\kappa)(dx^2+dy^2)$, where
\begin{linenomath*}$$\lambda(\kappa)=\left\{
\begin{array}{cc}
\dfrac{2}{1+\kappa(x^2+y^2)},& \ \ \ \textnormal{if $\kappa\neq0$}\\ 
 1,& \ \ \ \textnormal{if $\kappa=0$}
\end{array}
\right..
$$
\end{linenomath*}

Let us consider the 3-dimensional Riemannian product $\R\times\mathbb{M}^2(\kappa) $, here $\R$ denotes the real line with its standard metric $dt^2$, and the canonical projections on the first and second factors.  
\begin{linenomath*}$$\pi_1:\R \times \mathbb{M}^2(\kappa) \to \R \hspace{.3cm} \textnormal{and} \hspace{.3cm} \pi_2:\R\times\mathbb{M}^2(\kappa) \to \mathbb{M}^2(\kappa)$$ 
\end{linenomath*}
defined by $\pi_1(t,p)=t$ and $\pi_2(t,p)=p$, respectively. For each smooth real function $f: \R\to \R$, the warped product $\mrf=(\R\times\M^2(\kappa),\overline{g})$ is the manifold $\R\times\mathbb{M}^2(\kappa)$ which is endowed with the Riemannian metric
\begin{linenomath*}
\begin{equation*}
\overline{g}=dt^2+e^{2f(t)}d\sigma^2_\kappa.
\end{equation*}
\end{linenomath*}
From now on, we use the bilinear form $\langle\cdot,\cdot\rangle$ associated with the Riemannian metric $\overline{g}$. In addition, we denote by $\nabla^1$, $\nabla^2$, $\overline{\nabla}$ the Levi-Civita connection of $\R$, $\M^2(\kappa)$ and $\mrf$, respectively. The tensor curvatures of $\mrf$ and $\M^2(\kappa)$ are denoted by $\overline{R}$ and $R^2$, respectively. Given vector fields $A,B,C\in\chi(\mrf)$, we define the Riemannian curvature tensor by
\begin{linenomath*}
 $$\overline{R}(A,B)C=\overline{\nabla}_B\overline{\nabla}_AC-\overline{\nabla}_A\overline{\nabla}_BC +\overline{\nabla}_{[A,B]}C.$$
\end{linenomath*}

In order to give a relation between the vector fields on $\mathbb{M}, \R$ and on $\mrf$, we need some definitions.

\begin{definition}
Let $\phi:N_1\ \to\ N_2$ be a smooth mapping between two differentiable manifolds. Vector fields $X\in\chi(N_1)$  and $Y\in\chi(N_2)$ are $\phi$-related if
\begin{linenomath*}
$$d \phi(X(p))=Y(\phi(p)), \ \ \textnormal{for all }\ \ p\in\ N_1.$$
\end{linenomath*}
\end{definition} 
We recall that:
\begin{enumerate}
\item The  lift of a vector field $X\in\chi(\M^2(\kappa))$ is the unique vector field  $\overline{X}\in\chi(\mrf)$ that is $\pi_1$-related to $X$ and $\pi_2$-related to the  zero vector field on $\chi(\mathbb{R})$, where $\pi_1$ and $\pi_2$ are the canonical projections. We denote by $\mathfrak{L}(\M^2(\kappa))\subset\chi(\mrf)$ the set of all such lifts.
\item Similarly,  $\mathfrak{L}(\R)\subset\chi(\mrf)$ denote  the set of all lifts of vector fields $V\in\chi(\R)$ to $\mrf$, that is, the vector fields $\overline{V}\in\chi(\mrf)$ which are $\pi_2$-related to the vector field $V\in\chi(\R)$ and $\pi_1$-related to the zero vector field on $\chi(\mathbb{M}^2(\kappa))$.

\end{enumerate}

\begin{remark}
When there is no confusion, by abuse of notation, we will use the same notation for a vector field and for its horizontal lift. If necessary, we will use the over-bar to emphasize the lift of a vector field. Moreover, in order to simplify some expressions in $\mrf$, we denote the canonical vector field, which points in the positive directions of the $t$-axis, $x$-axis, $y$-axis by
\begin{displaymath}
\dfrac{\partial}{\partial_t}=\partial_t, \hspace{.5cm}  \dfrac{\partial}{\partial_x}=\partial_x, \hspace{.5cm} \dfrac{\partial}{\partial_y}=\partial_y. 
\end{displaymath}
and we let $\xi$ be the lift of the vector $\partial_t$, where $\partial_t$ denotes the unit tangent vector field to the real line $\R$. It is clear that the geometric structure of the warped product is contained in the data $\kappa$ and $f$.
\end{remark}

\subsection{Surfaces isometrically immersed in \texorpdfstring{$\mrf$}{mrf}}
Let $\varphi:\Sigma \looparrowright\mrf$ be an isometric  immersion from a complete, orientable surface $\Sigma$ into the warped product $\mrf$. For simplicity, we will study the properties of the immersion $\varphi$ as those of $\Sigma$ and we denote by $\Sigma$ the image $\varphi(\Sigma)$.  We denote by $g$ the pull-back of the metric $\overline{g}$  on $\Sigma$ and by $\nabla$ the Levi-Civita connection associated with $g$. 

 Let $N$ be a unit normal vector field on $\Sigma$ (from now on, we fix the orientation $N$ of $\Sigma$), we denote by $S$ the shape operator and by $B$ the second fundamental form of $\Sigma$. Then, we have $\overline{g}(B(X,Y),N)=g(X,SY)$ for all $X,Y\in\chi(\Sigma)$. The formulas of Gauss and Weingarten state, respectively, that for all $X,Y\in\chi(\Sigma)$

\begin{linenomath*}
\begin{equation}\label{e100}
\overline{\nabla}_XY=\nabla_XY+B(X,Y) \hspace{1cm} \textnormal{and} \hspace{1cm} \overline{\nabla}_XN=-SX.
\end{equation}
\end{linenomath*}

The Riemann curvature tensor $R$ of the surface $\Sigma$ is related to the second fundamental form and the curvature tensor $\overline{R}$ of the ambient space by means of the Gauss equation
\begin{linenomath*}
\begin{equation}\label{ac1}
R(X,Y)Z=(\overline{R}(X,Y)Z)^T- g(SY,Z)SX+g(SX,Z)SY,
\end{equation}
\end{linenomath*}
for all $X,Y,Z\in\chi(\Sigma)$, here $X^T$ describes the tangent component of $X$ on the surface $\Sigma$.

The Codazzi's equation is given by
\begin{linenomath*}
\begin{equation}\label{wc1}
\overline{R}(X,Y)N=\nabla_XSY-\nabla_YSX-S[X,Y],
\end{equation}
\end{linenomath*}
where $X,Y,Z\in\chi(\Sigma)$ and $N$ is the unit normal vector field compatible with the orientation of $\Sigma$.

On the other hand, we can decompose the vertical field $\xi$ over the tangent and normal components with respect to the surface $\Sigma$. That is, we can write
\begin{linenomath*}
\begin{equation}\label{e200}
\xi=T+\nu N
\end{equation}
\end{linenomath*}
where $T$ is the tangent projection of $\xi$ over $\Sigma$ and $\nu N$ is the orthogonal projection of $\xi$ over the unit normal vector field $N$.  The function $\nu$ is called \emph{ angle function}. With the aid of the vector field $T\in\chi(\Sigma)$ we define a tensor $\mathcal{T}:\chi^4(\Sigma)\to \R$ of order $4$ over $\Sigma$, given by 

\begin{equation*}
 \mathcal{T}_{X}^Y(Z,W)=g(X,Z)g(Y,T)g(W,T)- g(X,W)g(Y,T)g(Z,T),
\end{equation*}
for vector fields $X,Y,Z,W\in\chi(\Sigma)$.  With those definitions, we can state the necessary equations that must be satisfied on the surface $\Sigma$ for it to be isometrically immersed in $\mrf$.

\begin{theorem}[\cite{CFP}]\label{wd1}
Let $\Sigma$  be a simply connected Riemannian surface with metric $g$, Riemannian connection $\nabla$ and Riemann curvature tensor $R$.  Then,  for all $X,Y,Z,W\in\chi(\Sigma)$, we have 
\vspace{.3cm}


\vspace{-0.2cm}
\begin{equation}\tag{\bf Gauss equation}
\label{gauss}\mathcal{R}(X,Y,Z,W)  =  g(R(X,Y)Z,W)- g(SX,Z)g(SY,W) +g(SX,W)g(SY,Z).
\end{equation}

\begin{equation}\tag{\bf Codazzi equation}
\label{codazzi}\mathcal{S}(X,Y) =  \nabla_XSY-\nabla_YSX-S([X,Y]).
\end{equation}

Where

\begin{eqnarray*}
\mathcal{R}(X,Y,Z,W) & = & \left((f^\prime)^2-\kappa e^{-2f}\right)[g(X,W)g(Y,Z)-g(X,Z)g(Y,W)]- \\
& & -\left(f^{\prime\prime}+\kappa e^{-2f}\right) [\mathcal{T}_X^Y(Z,W)-\mathcal{T}_Y^X(Z,W)]
\end{eqnarray*}

and
\begin{equation*}
\mathcal{S}(X,Y)=-\nu\left(f^{\prime\prime}+\kappa e^{-2f}\right)[g(X,T)Y-g(Y,T)X].
\end{equation*}
Moreover, the following equations hold for the immersion

\begin{eqnarray}
\label{35} \vert T\vert^2+\nu^2&=&1\\[10pt]
\label{33} \nabla_XT-\nu SX & = & f^\prime[X-g(X,T)T] \\[15pt]
\label{34} g(SX,T)+d\nu(X) & = & -f^\prime\, \nu\, g(X,T)
\end{eqnarray} 
Where $f^\prime$ denotes the derivative of the warping function $f$ with respect to the variable $t$. 

\end{theorem}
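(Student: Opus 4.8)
The plan is to specialize the classical Gauss and Codazzi equations \eqref{ac1} and \eqref{wc1} of submanifold theory to the warped product $\mrf$, once the ambient curvature tensor $\overline{R}$ has been written in closed form. The decomposition $\xi = T + \nu N$ from \eqref{e200} will then translate every occurrence of $\langle\,\cdot\,,\xi\rangle$ into $g(\,\cdot\,,T)$ on tangent vectors and into $\nu$ against the normal $N$, which is precisely what produces the tensor $\mathcal{T}_X^Y$ and the angle function $\nu$ in the final expressions.

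First I would compute $\overline{R}$ for $\mrf=\R\times_f\M^2(\kappa)$, viewing it as a warped product with one-dimensional (hence flat) base $\R$, fibre $\M^2(\kappa)$ of constant curvature $\kappa$, and warping function $e^{f}$. Applying O'Neill's warped-product curvature formulas (see \cite{BO}, in the form recorded in \cite{ES1}), the base contributes nothing, the fibre contributes its curvature $\kappa$ rescaled by $e^{-2f}$, and the mixed terms contribute the Hessian and gradient of $e^{f}$ along $\partial_t$. Since $\grad(e^f)=f'e^f\,\partial_t$ and $\hess(e^f)(\partial_t,\partial_t)=(f''+(f')^2)e^f$, collecting terms should yield, for all $A,B,C,D\in\chi(\mrf)$,
\begin{align*}
\langle \overline{R}(A,B)C,D\rangle &= \big((f')^2-\kappa e^{-2f}\big)\big[\langle A,D\rangle\langle B,C\rangle-\langle A,C\rangle\langle B,D\rangle\big]\\
&\quad-\big(f''+\kappa e^{-2f}\big)\,\widetilde{\mathcal{T}}(A,B,C,D),
\end{align*}
where $\widetilde{\mathcal{T}}$ is the order-four expression obtained from $\mathcal{T}_X^Y(Z,W)-\mathcal{T}_Y^X(Z,W)$ by replacing each factor $g(\,\cdot\,,T)$ with $\langle\,\cdot\,,\xi\rangle$. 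Verifying this closed form is the main obstacle: one must organize the horizontal/vertical decomposition carefully so that the Hessian and gradient contributions combine into the clean coefficients $(f')^2-\kappa e^{-2f}$ and $f''+\kappa e^{-2f}$, with the $(f')^2$ from the mixed term absorbed into the space-form coefficient.

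With $\overline{R}$ in hand, the \eqref{gauss} follows by taking $X,Y,Z,W\in\chi(\Sigma)$, so that $\langle\,\cdot\,,\xi\rangle=g(\,\cdot\,,T)$ and $\widetilde{\mathcal{T}}$ becomes exactly $\mathcal{T}_X^Y(Z,W)-\mathcal{T}_Y^X(Z,W)$; substituting $\langle \overline{R}(X,Y)Z,W\rangle=\mathcal{R}(X,Y,Z,W)$ into \eqref{ac1} and pairing with $W$ produces the stated identity, the two shape-operator terms coming directly from the tangential Gauss equation. For the \eqref{codazzi}, I would set $C=N$ in the formula for $\overline{R}$: the space-form term drops out because $\langle X,N\rangle=\langle Y,N\rangle=0$, while in $\widetilde{\mathcal{T}}$ only the terms carrying $\langle N,\xi\rangle=\nu$ survive, leaving the tangential part $(\overline{R}(X,Y)N)^T=-\nu(f''+\kappa e^{-2f})[g(X,T)Y-g(Y,T)X]$; comparison with \eqref{wc1} identifies this with $\mathcal{S}(X,Y)$.

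Finally, the three compatibility relations are obtained directly from $\xi=T+\nu N$. Equation \eqref{35} is just $|\xi|^2=|T|^2+\nu^2=1$, since $\xi=\partial_t$ is unit. For \eqref{33} and \eqref{34} I would differentiate $\xi=T+\nu N$ along a tangent field $X$ using the Gauss and Weingarten formulas \eqref{e100}, together with the warped-product identity $\overline{\nabla}_X\xi=f'\big(X-\langle X,\xi\rangle\xi\big)$ (which follows from $\overline{\nabla}_V\partial_t=f'V$ for vertical $V$ and $\overline{\nabla}_{\partial_t}\partial_t=0$). Separating the resulting equation into its tangent and normal components and using $\langle X,\xi\rangle=g(X,T)$ together with $\langle N,\xi\rangle=\nu$ yields \eqref{33} from the tangential part and \eqref{34} from the normal part, completing the argument.
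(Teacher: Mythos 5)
Your proposal is correct, but note that there is no internal proof to compare it against: the paper quotes Theorem \ref{wd1} from \cite{CFP} and, in the remark immediately following it, defers the proof to \cite{ES1}, where these identities are obtained as the compatibility equations of a fundamental theorem for hypersurfaces in semi-Riemannian warped products. Your route is the standard direct derivation of the necessity direction, and it does go through. The closed form you posit for $\overline{R}$ is right: on a vertical plane it returns $(f^\prime)^2-\kappa e^{-2f}$ and on a mixed plane $(f^\prime)^2+f^{\prime\prime}$, which match the Bishop--O'Neill sectional curvatures $\left(\kappa-\vert\grad (e^f)\vert^2\right)e^{-2f}$ and $-(e^f)^{\prime\prime}e^{-f}$ once you account for the paper's reversed convention $\overline{R}(A,B)C=\overline{\nabla}_B\overline{\nabla}_AC-\overline{\nabla}_A\overline{\nabla}_BC +\overline{\nabla}_{[A,B]}C$; the splitting $(f^\prime)^2+f^{\prime\prime}=\left((f^\prime)^2-\kappa e^{-2f}\right)+\left(f^{\prime\prime}+\kappa e^{-2f}\right)$ is exactly the absorption you describe. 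Setting $C=N$, the only surviving terms of your ambient tensor are $\nu\left[g(Y,W)g(X,T)-g(X,W)g(Y,T)\right]$, which reproduces $\mathcal{S}(X,Y)$; and splitting $\overline{\nabla}_X\xi=\nabla_XT+\left[g(SX,T)+d\nu(X)\right]N-\nu SX$ against $f^\prime\left(X-g(X,T)T\right)-f^\prime g(X,T)\nu N$ gives \eqref{33} and \eqref{34}, with \eqref{35} immediate. What the paper's citation buys that your argument does not is the converse: \cite{ES1} proves these equations are also sufficient for the existence of an immersion, which is where the simple-connectedness hypothesis actually enters; your proof never uses it, and indeed the statement as worded only asserts necessity, which is all the paper ever uses. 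The one caveat for a complete write-up is that the closed-form expression for $\overline{R}$ is the entire computational content of the theorem, and your proposal asserts it ("collecting terms should yield") rather than derives it; those O'Neill terms, including the sign bookkeeping noted above, must actually be collected.
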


\begin{remark}
The proof of the above theorem can be found in \cite{ES1}, where the authors have shown that these equations are, in fact, the compatibility equations for isometric immersions in $\mrf$.
\end{remark}

From now on, for the warping function $f$, we denote its derivatives with respect to the variable $t$ by $f^\prime$ and $f^{\prime\prime}$. While for other functions $h=h(m)$ we will denote its derivative with respect to the variable $m$ by $h_m$ and $h_{mm}$.

\section{Necessary equations for surfaces isommetrically immersed in \texorpdfstring{$\mrf$}{mrf}.}

In this section, we rewrite the equations in theorem \ref{wd1} with respect to a special parameter induced by the first or second fundamental form. To do so,  let us consider $\varphi:\Sigma \looparrowright\mrf$ an isometric immersion from a complete, orientable surface. We denote by $h:\Sigma\to\R$ the horizontal height (or just height) function of $\Sigma$, that is, $h=\pi_1(\varphi)$. Then, the horizontal vector field $\xi$ is the gradient of the function $t$. In fact, for any vertical field $X$, we have
\begin{eqnarray*}
\langle \grad (t),\xi\rangle & = & dt(\xi)=1, \\
\langle \grad (t),X\rangle & = & dt(X)=0, \hspace{.5cm} \forall X\in\chi(\mrf), \hspace{.5cm} \langle \xi,X\rangle=0.
\end{eqnarray*}
Thus, $\grad (t)$ is horizontal, since $\xi$ is a basis of the horizontal vector fields, we conclude $\grad (t)=\xi$. 

Moreover, we have decomposed the vector field $\xi$ in the tangent and normal components with respect to the surface $\Sigma$ as 
$\xi=T+\nu N$ where $T$ is the tangent projection of $\xi$ over $\Sigma$ and $\nu N$ is the orthogonal projection of $\xi$ over the unit normal vector field $N$. Hence, we deduce $T$ is the gradient of $h$ on $\Sigma$, since
\begin{equation*}
\langle\grad h,T\rangle=\langle\grad h,\xi-\nu N\rangle=\langle\grad h,\xi\rangle=1
\end{equation*}

\subsection{Equations for surfaces in \texorpdfstring{$mrf$}{mrf} with positive extrinsic  curvature.}

Let  $\varphi:\Sigma \looparrowright\mrf$ be a complete, orientable surface with positive extrinsic curvature $K_e$. Since $K_e>0$, the second fundamental form $II$ is a positive definite bilinear form (and positive definite for a suitable normal $N$). Moreover,  we can choose a conformal parameter $z$ for the second fundamental form $II$. Hence, the fundamental forms $I$ and $II$ can be written as
\begin{eqnarray}
\nonumber I & = & \langle d\varphi,d\varphi\rangle=Edz^2+2F\vert dz\vert^2+\overline{E}d\overline{z}^2 \\
\label{e1} II & = & -\langle d\varphi,dN\rangle=2\rho\vert dz\vert^2
\end{eqnarray}
Where $\rho$ is a real function such that $\rho>0$, $dz$ and $d\olz$ are the dual forms that correspond to vector fields $\partial_{z} = \frac{1}{2} (\partial_{x} - i \partial_{y})$ and   $\partial_{\olz} = \frac{1}{2}(\partial_{x} + i \partial_{y})$, respectively. If we denote by $H$ the mean curvature function of $\Sigma$ can be shown that (see \cite{AEG}, \cite{AEG2} )
\begin{eqnarray}
\label{e2} K_{e} & = & -\dfrac{\rho^2}{D} \\
\label{e3} H & = & \dfrac{K_e}{\rho}F
\end{eqnarray}
where $D = \vert E \vert^{2} - F^{2}$ and $H$ is the mean curvature of the surface. 
\begin{remark}
If $K_{e} >0$, then from equation \eqref{e2} we have $D < 0$.  Additionally, from equation \eqref{e3}, we can choose the orientation of $\Sigma$ such that the principal curvatures are positive, so that $H>0$ and we can assume $F>0$. 
\end{remark}

Before rewriting the necessary equations in the conformal parameter $z$ for the case of positive extrinsic curvature, for the sake of completeness, we show the following lemma,  which has already been proven in \cite{AEG} and \cite{AEG2}.

\begin{lemma}[\cite{AEG,AEG2}]\label{l1}
Let $\varphi:\Sigma \looparrowright\mrf$ be an immersion from a complete, orientable surface, with positive extrinsic curvature $K_e$. Then, in the conformal parameter $z$ for the second fundamental form $II$, the following equations hold:
\begin{eqnarray}
\label{e4} T &= & \dfrac{1}{D}[\alpha\partial_z+\overline{\alpha}\partial_{\overline{z}}] \\
\label{e5} \alpha & = & \overline{E}h_z-Fh_{\overline{z}}. \\
\label{e6} \langle T,T\rangle & = & \dfrac{1}{D}[\alpha h_z+\overline{\alpha}h_{\overline{z}}] \\
\label{e7} \langle T,T\rangle & = & \dfrac{1}{D}[Eh^2_{\overline{z}}+\overline{E}h^2_z-2Fh_zh_{\overline{z}}] \\
\label{e8} h_z & = & \dfrac{1}{D}[E\alpha+F\overline{\alpha}] \\
\label{e9} \vert h_z\vert^2 & = & \vert\vert T\vert\vert^2F+\dfrac{\vert\alpha\vert^2}{D}
\end{eqnarray}
Here $h_z$ denotes the derivative of the function $h$ with respect to the variable $z$, and so on.
\end{lemma}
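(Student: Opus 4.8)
The plan is to exploit the single geometric input recorded just before the lemma, namely that $T=\grad h$, and then reduce every one of the six identities to linear algebra in the complexified tangent space. First I would read off from the expression for $I$ in \eqref{e1} the three fundamental inner products $\langle\partial_z,\partial_z\rangle=E$, $\langle\partial_z,\partial_{\overline{z}}\rangle=F$, and $\langle\partial_{\overline{z}},\partial_{\overline{z}}\rangle=\overline{E}$, where the metric is extended $\mathbb{C}$-bilinearly. Since $T=\grad h$, pairing $T$ against any tangent vector $X$ returns $dh(X)=Xh$; in particular $\langle T,\partial_z\rangle=h_z$ and $\langle T,\partial_{\overline{z}}\rangle=h_{\overline{z}}$.

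Next, because $T$ is a real vector field and $\overline{\partial_z}=\partial_{\overline{z}}$, I would write $T=a\,\partial_z+\overline{a}\,\partial_{\overline{z}}$ for a single complex coefficient $a$. Pairing this with $\partial_z$ and with $\partial_{\overline{z}}$, and using the inner products above, yields the $2\times2$ linear system $aE+\overline{a}F=h_z$ and $aF+\overline{a}\,\overline{E}=h_{\overline{z}}$. Eliminating $\overline{a}$ (multiply the first equation by $\overline{E}$, the second by $F$, and subtract) gives $a\,(|E|^2-F^2)=\overline{E}h_z-Fh_{\overline{z}}$, that is $aD=\alpha$ with $\alpha$ as in \eqref{e5}; this establishes \eqref{e4} and \eqref{e5} simultaneously. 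Substituting $a=\alpha/D$ back into the first equation of the system produces \eqref{e8}.

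The remaining three identities are then purely algebraic. For \eqref{e6} I would compute $\langle T,T\rangle=a\langle\partial_z,T\rangle+\overline{a}\langle\partial_{\overline{z}},T\rangle=a\,h_z+\overline{a}\,h_{\overline{z}}$ and insert $a=\alpha/D$. Inserting the explicit forms $\alpha=\overline{E}h_z-Fh_{\overline{z}}$ and $\overline{\alpha}=Eh_{\overline{z}}-Fh_z$ into this expression and expanding then yields \eqref{e7}. Finally, for \eqref{e9} I would start from $\|T\|^2F+|\alpha|^2/D$, use \eqref{e6} together with the identity $Fh_z+\overline{\alpha}=Eh_{\overline{z}}$ to collapse the sum to $(h_{\overline{z}}/D)(E\alpha+F\overline{\alpha})$, and recognize the bracket as $Dh_z$ via \eqref{e8}, leaving $h_z h_{\overline{z}}=|h_z|^2$.

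I do not anticipate a genuine obstacle: the content is the single identification $T=\grad h$ followed by solving one invertible linear system (invertibility being guaranteed by $D\neq0$, which holds since $K_e>0$ forces $D<0$ by \eqref{e2}). The only point requiring care is the bookkeeping of complex conjugates, controlled throughout by the convention that $\langle\cdot,\cdot\rangle$ is extended $\mathbb{C}$-bilinearly rather than Hermitianly, which fixes the placement of the bars and the signs in every step.
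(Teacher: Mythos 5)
Your proposal is correct and follows essentially the same route as the paper: decompose $T=\grad h$ in the basis $\{\partial_z,\partial_{\overline{z}}\}$, solve the resulting $2\times2$ linear system (invertible since $D<0$), and obtain the remaining identities by substitution. Your minor streamlinings --- imposing the reality constraint $T=a\,\partial_z+\overline{a}\,\partial_{\overline{z}}$ from the start, reading \eqref{e8} off the back-substituted system rather than verifying it separately, and collapsing \eqref{e9} via the identity $Fh_z+\overline{\alpha}=Eh_{\overline{z}}$ --- are correct simplifications of the same computation, not a different argument.
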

\begin{proof}
Since $T\in\chi(\Sigma)$, we have $T=a\partial_z+b\partial_{\overline{z}}$, then
\begin{eqnarray*}
h_z & = & \langle T,\partial_z\rangle = aE+bF \\
h_{\overline{z}} & = & \langle T,\partial_{\overline{z}} \rangle = aF+b\overline{E}.
\end{eqnarray*}
Hence
\begin{displaymath}
\begin{pmatrix}
h_z\\
h_{\overline{z}}
\end{pmatrix} 
=
\begin{pmatrix}
E&F\\
F&\overline{E}
\end{pmatrix} 
\begin{pmatrix}
a\\
b
\end{pmatrix} 
\Rightarrow
\begin{pmatrix}
a\\
b
\end{pmatrix} 
=\dfrac{1}{D}
\begin{pmatrix}
\overline{E}&-F\\
-F&E
\end{pmatrix} 
\begin{pmatrix}
h_z\\
h_{\overline{z}}
\end{pmatrix}
\end{displaymath} 
And one gets 
\begin{equation*}
T=\dfrac{1}{D}[\alpha\partial_z+\overline{\alpha}\partial_{\overline{z}}], \hspace{5.7cm} \textnormal{Equation \eqref{e4}}
\end{equation*}
where
\begin{equation*}
\alpha=\overline{E}h_z-Fh_{\overline{z}}.  \hspace{6.3cm} \textnormal{Equation \eqref{e5}}
\end{equation*}
By definition, we have $T=\grad (h)$, so
\begin{equation*}
\langle T,T\rangle=\left\langle \grad (h),\dfrac{1}{D}[\alpha\partial_z+\overline{\alpha}\partial_{\overline{z}}] \right\rangle =\dfrac{1}{D}[\alpha h_z+\overline{\alpha}h_{\overline{z}}]. \hspace{1cm} \textnormal{Equation \eqref{e6}}
\end{equation*}
Moreover, from equation \eqref{e6}
\begin{eqnarray*}
\langle T,T\rangle & = & \dfrac{1}{D^2}[\alpha^2 E+2\alpha\overline{\alpha}F+\overline{\alpha}^2\overline{E}]  =  \dfrac{1}{D^2}D[Eh^2_{\overline{z}}+\overline{E}h^2_z-2Fh_zh_{\overline{z}}].
\end{eqnarray*}
That is
\begin{equation*}
\langle T,T\rangle=\dfrac{1}{D}[Eh^2_{\overline{z}}+\overline{E}h^2_z-2Fh_zh_{\overline{z}}] \hspace{3.8cm} \textnormal{Equation \eqref{e7}}
\end{equation*}
Now,
\begin{eqnarray*}
\dfrac{1}{D}[E\alpha+F\overline{\alpha}] & = & \dfrac{1}{D}[(\overline{E}h_z-Fh_{\overline{z}})E+(Eh_{\overline{z}}-Fh_z)F] \\
     & = & \dfrac{1}{D}\left[\vert E\vert^2 h_z-EFh_{\overline{z}}+EFh_{\overline{z}} -F^2h_z \right] \\
     & = & \dfrac{1}{D}\left[\vert E\vert^2-F^2 \right]h_z.
\end{eqnarray*}
and this implies the equation \eqref{e8}. Finally,  from the previous expressions, we deduce that
\begin{equation*}
\vert\vert T\vert\vert^2 F+\dfrac{\vert\alpha\vert^2}{D} = \dfrac{1}{D}\left[[\alpha h_z+\overline{\alpha}h_{\overline{z}}]F +\left(\vert E\vert^2h_zh_{\overline{z}} -\overline{E}Fh_z^2-EFh_{\overline{z}}^2 +F^2h_zh_{\overline{z}}\right)  \right]. 
\end{equation*}
Therefore,
\begin{equation*}
\vert\vert T\vert\vert^2 F+\dfrac{\vert\alpha\vert^2}{D} = \dfrac{1}{D}\left[\vert E\vert^2 h_zh_{\overline{z}}-F^2h_zh_{\overline{z}}  \right]=\vert h_z\vert^2, 
\end{equation*}
which is the equation \eqref{e9}.
\end{proof}

In the following theorem, we write the necessary equations in the conformal parameter $z$ induced by the second fundamental form $II$ when the surface $\Sigma$ has a positive extrinsic curvature and is immersed in the warped product  $\mrf$. In fact, this extends what was done in \cite{AEG2} and  \cite{EGR}.

\begin{lemma}\label{l2}
Let  $\varphi:\Sigma \looparrowright \mrf$ be an isometric immersion from a complete, orientable surface, with positive extrinsic curvature $K_e$. Then, for a conformal parameter $z$ for the second fundamental form, the following equations hold:
\begin{eqnarray}
\label{e10}  \textbf{Codazzi} \hspace{.5cm} \dfrac{\rho_{\overline{z}}}{\rho} & + & (\Gamma^1_{12}- \Gamma^2_{22})  =  \frac{\nu}{\rho}\alpha \left( f^{\prime\prime} +\kappa e^{-2f}  \right).\\
\label{e11}  \nu_z & = & \dfrac{\overline{\alpha}K_e}{\rho} -f^\prime\nu h_z. \\
\label{e12} \Gamma^1_{12} & = & -\dfrac{(K_{e})_{\overline{z}}}{4Ke} +\dfrac{\nu\alpha}{2\rho}\left( f^{\prime\prime} +\kappa e^{-2f} \right) \\
\label{e12.1} h_{zz} & = & \Gamma_{11}^{1} h_{z} + \Gamma^{2}_{11} h_{\overline{z}} + f^{\prime}E-f^{\prime} \big( h_{z} \big)^{2}.\\ 
\label{e13} h_{z\overline{z}} & = & \rho \nu \Big( 1 - \frac{( f^{\prime\prime} +\kappa e^{-2f})(1-\nu^{2})}{2 K_{e}} \Big) - \frac{1}{4K_{e}} \Big( (K_{e})_{\overline{z}}h_{z} + (K_{e})_{z}h_{\overline{z}} \Big) - f^{\prime} (F - h_{\overline{z}}h_{z}).\\   
\label{e14} \alpha_z & = & \alpha \frac{D_{z}}{2D} + \frac{1}{4K_{e}} \big( \overline{\alpha}(K_{e})_{\overline{z}} - \alpha (K_{e})_{z} \big) - \rho F \nu - f^{\prime} \big(\alpha h_{z}  - D \big).\\
\label{e15} \nu_{z \overline{z}} & = & \frac{1}{4 \rho} \Big( \overline{\alpha}(K_{e})_{\overline{z}} + \alpha (K_{e})_{z} \Big) - K_{e} F \nu - \frac{K_{e}f^{\prime}}{\rho} \Big(\alpha h_{z}  - D \Big) - f^{\prime}[\nu_{z} h_{\overline{z}} + \nu h_{z\overline{z}}] - f^{''}\nu \abs{h_{z}}^{2}
\end{eqnarray}
Here $h_z$ denotes the derivative of the function $h$ with respect to the variable $z$, and $f'$, $f''$ denote the derivatives of the function $f$ respect to $t$.
\end{lemma}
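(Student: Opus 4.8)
\emph{Proof proposal.} The plan is to work throughout in the conformal parameter $z$ for the second fundamental form, so that $II(\partial_z,\partial_z)=II(\partial_{\overline z},\partial_{\overline z})=0$ and $II(\partial_z,\partial_{\overline z})=\rho$. The first step is to record the shape operator in this frame: writing $S\partial_z=a\partial_z+b\partial_{\overline z}$ and imposing $g(S\partial_z,\partial_z)=II(\partial_z,\partial_z)=0$ together with $g(S\partial_z,\partial_{\overline z})=\rho$ forces, after using $D=\abs{E}^2-F^2$, $K_e=-\rho^2/D$ and $H=K_eF/\rho$, the explicit formulas $S\partial_z=H\partial_z-\frac{K_eE}{\rho}\partial_{\overline z}$ and, by conjugation, $S\partial_{\overline z}=-\frac{K_e\overline E}{\rho}\partial_z+H\partial_{\overline z}$. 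These expressions, together with Lemma \ref{l1}, are the only input beyond Theorem \ref{wd1}.

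With this in hand I would first dispatch the first-order identities. Equation \eqref{e11} is immediate from \eqref{34} at $X=\partial_z$: since $T=\grad h$ gives $g(\partial_z,T)=h_z$, and $g(S\partial_z,T)=\frac1D[\alpha\cdot 0+\overline\alpha\rho]=-\overline\alpha K_e/\rho$ by \eqref{e4} and $K_e=-\rho^2/D$, equation \eqref{34} rearranges directly to \eqref{e11}. The two Hessian equations \eqref{e12.1} and \eqref{e13} follow by computing $\hess h$ in two ways. From \eqref{33} one gets $\hess h(X,Y)=g(\nabla_XT,Y)=\nu\,II(X,Y)+f'[g(X,Y)-g(X,T)g(Y,T)]$, while the coordinate expansion gives $\hess h(\partial_z,\partial_z)=h_{zz}-\Gamma^1_{11}h_z-\Gamma^2_{11}h_{\overline z}$ and similarly for the mixed derivative. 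Matching the $(\partial_z,\partial_z)$-entries and using $II(\partial_z,\partial_z)=0$ yields \eqref{e12.1} at once; matching the $(\partial_z,\partial_{\overline z})$-entries, then substituting \eqref{e12} for $\Gamma^1_{12}$ (and its conjugate for $\Gamma^2_{12}$) and using \eqref{e6}, the norm relation \eqref{35} in the form $\alpha h_z+\overline\alpha h_{\overline z}=D(1-\nu^2)$, and $D=-\rho^2/K_e$, collapses the curvature terms into the stated form of \eqref{e13}.

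For the Codazzi equations I would substitute $X=\partial_z,\ Y=\partial_{\overline z}$ into the Codazzi equation of Theorem \ref{wd1}. Since $[\partial_z,\partial_{\overline z}]=0$, the right-hand side is $\nabla_{\partial_z}S\partial_{\overline z}-\nabla_{\partial_{\overline z}}S\partial_z$, which I expand using the Step-0 expression for $S$ and $\nabla_{\partial_i}\partial_j=\Gamma^1_{ij}\partial_z+\Gamma^2_{ij}\partial_{\overline z}$; the left-hand side is $\mathcal S(\partial_z,\partial_{\overline z})=-\nu(f''+\kappa e^{-2f})[h_z\partial_{\overline z}-h_{\overline z}\partial_z]$. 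Isolating the appropriate scalar component and simplifying with $H=K_eF/\rho$ yields \eqref{e10}. Then \eqref{e12} becomes purely algebraic: the contracted Christoffel identity $\Gamma^1_{12}+\Gamma^2_{22}=\partial_{\overline z}\log\sqrt{\abs{D}}=\frac{\rho_{\overline z}}{\rho}-\frac{(K_e)_{\overline z}}{2K_e}$ (using $D=-\rho^2/K_e$) lets me eliminate $\Gamma^2_{22}$ from \eqref{e10} and solve for $\Gamma^1_{12}$.

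Finally, the two second-order equations are obtained by differentiation. For \eqref{e14} I would differentiate \eqref{e5}, namely $\alpha=\overline E h_z-F h_{\overline z}$, in $z$, replace $h_{zz},h_{z\overline z}$ by \eqref{e12.1} and \eqref{e13}, and convert the derivatives $\overline E_z,F_z$ into Christoffel combinations through the metric-compatibility relation $\partial_k g_{ij}=\Gamma^l_{ki}g_{lj}+\Gamma^l_{kj}g_{il}$, then simplify with \eqref{e12} and $D=-\rho^2/K_e$. For \eqref{e15} I would differentiate \eqref{e11} in $\overline z$, using that $f$ is evaluated along the height so that $\partial_{\overline z}f'=f''h_{\overline z}$, inserting $\overline\alpha_{\overline z}=\overline{\alpha_z}$ from \eqref{e14}, the Codazzi relation \eqref{e10} for $\rho_{\overline z}/\rho$, and \eqref{e13} for $h_{z\overline z}$, and collecting terms. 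I expect these last two steps, especially \eqref{e15}, to be the main obstacle: they are long and demand the systematic elimination of every metric-coefficient derivative in favour of the geometric quantities $K_e,\rho,\nu,\alpha,h$, together with careful tracking of the warping contributions $f'$, $f''$ and the curvature combination $f''+\kappa e^{-2f}$, where sign and bookkeeping errors are easiest to make.
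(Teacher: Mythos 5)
Your proposal is correct and follows essentially the same route as the paper: each equation is obtained by specializing Theorem \ref{wd1} and Lemma \ref{l1} to the conformal frame $\{\partial_z,\partial_{\overline z}\}$ for $II$ (equations \eqref{34} and \eqref{33} yield \eqref{e11}, \eqref{e12.1}, \eqref{e13}; the Codazzi equation together with the contracted Christoffel identity $\Gamma^1_{12}+\Gamma^2_{22}=D_{\overline z}/(2D)$ yields \eqref{e10} and \eqref{e12}; differentiating \eqref{e5} and \eqref{e11} yields \eqref{e14} and \eqref{e15}), and your only deviations are cosmetic (using the explicit matrix of $S$ in the Codazzi step, where the paper works with derivatives of $II$, and differentiating \eqref{e11} in $\overline z$ rather than its conjugate in $z$). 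One caveat: the mixed Hessian computation actually produces $+f^{\prime}(F-h_zh_{\overline z})$ --- exactly the sign the paper's own proof arrives at --- so the minus sign in front of $f^{\prime}(F-h_{\overline z}h_z)$ printed in \eqref{e13} is a typo in the statement, and your claim that the computation ``collapses into the stated form'' silently inherits that discrepancy.
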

\begin{proof}
Take $X=\partial_z$ at equation \eqref{34}, hence
\begin{displaymath}
g\left( S\partial_z,\grad(h) \right)+ d\nu(\partial_z)=-f^\prime\nu g\left(\grad(h),\partial_z \right).
\end{displaymath}
So,
\begin{displaymath}
\nu_z=-g\left( S\partial_z,\dfrac{1}{D}\left(\alpha\partial_z +\overline{\alpha}\partial_{\overline{z}} \right) \right) -f^\prime\nu h_z
\end{displaymath}
\begin{displaymath}
\nu_z=-\dfrac{1}{D}\left[\alpha g\left( S\partial_z,\partial_z \right) +\overline{\alpha} g\left( S\partial_z,\partial_{\overline{z}} \right) \right] -f^\prime\nu h_z
\end{displaymath}
and we deduce that
\begin{displaymath}
\nu_z=-\dfrac{\rho}{D}\overline{\alpha}  -f^\prime\nu h_z =\dfrac{\overline{\alpha} K_e}{\rho} -f^\prime\nu h_z
\end{displaymath}
which shows equation \eqref{e11}.

On the other hand, from the Codazzi equation in theorem \ref{wd1}, if $X=\partial_z$, $Y=\partial_{\overline{z}}$ and taking the scalar product with $\partial_{\overline{z}}$, we get
\begin{eqnarray*}
\left\langle \mathrm{T}_S(\partial_z,\partial_{\overline{z}}), \partial_{\overline{z}} \right\rangle & = & \left\langle \nu(f^{\prime\prime}+\kappa e^{-2f})\left[ g\left(\partial_{\overline{z}},\grad(h) \right)\partial_z -g\left(\partial_{z},\grad(h) \right)\partial_{\overline{z}} \right], \partial_{\overline{z}}   \right\rangle \\
\left\langle \mathrm{T}_S(\partial_z,\partial_{\overline{z}}), \partial_{\overline{z}} \right\rangle & = & \nu(f^{\prime\prime}+\kappa e^{-2f})\left[ h_{\overline{z}}F-h_z\overline{E} \right]\\
\left\langle \mathrm{T}_S(\partial_z,\partial_{\overline{z}}), \partial_{\overline{z}} \right\rangle & = & -\nu(f^{\prime\prime}+\kappa e^{-2f})\alpha \\
\left\langle \mathrm{T}_S(\partial_{\overline{z}},\partial_z), \partial_{\overline{z}} \right\rangle & = & \nu\alpha(f^{\prime\prime}+\kappa e^{-2f}).
\end{eqnarray*}

Now, from \eqref{e1}
\begin{eqnarray*}
\nabla_{\partial_z} \partial_z & = & \Gamma_{11}^1 \partial_z+\Gamma_{11}^2 \partial_{\overline{z}}   \\
\nabla_{\partial_z} \partial_{\overline{z}} & = & \Gamma_{12}^1 \partial_z+\Gamma_{12}^2 \partial_{\overline{z}} +\rho N 
\end{eqnarray*}
which implies
\begin{displaymath}
\left\langle \nabla_{\partial_{\overline{z}}}\nabla_{\partial_z} \partial_z - \nabla_{\partial_{z}}\nabla_{\partial_{\overline{z}}} \partial_z, \partial_{\overline{z}} \right\rangle =\rho_{\overline{z}} +\rho\left( \Gamma^1_{12}- \Gamma^2_{22} \right),
\end{displaymath}
that is

\begin{displaymath}
\left\langle \mathrm{T}_S(\partial_{\overline{z}},\partial_z), \partial_{\overline{z}} \right\rangle =\rho_{\overline{z}} +\rho\left( \Gamma^1_{12}- \Gamma^2_{22} \right).
\end{displaymath}
We conclude that
\begin{equation}\label{e0}
\dfrac{\rho_{\overline{z}}}{\rho} +\left( \Gamma^1_{12}- \Gamma^2_{22} \right) = \frac{\nu}{\rho}\alpha(f^{\prime\prime}+\kappa e^{-2f})
\end{equation}
and this shows the equation \eqref{e10}.

On the other hand, by a straightforward computation in equation \eqref{e1}, we obtain
\begin{equation}\label{e12.2}
\Gamma^1_{12} +\Gamma^2_{22} = \dfrac{D_{\overline{z}}}{2D}
\end{equation}
The previous equation, together with equation \eqref{e0}, implies that
\begin{equation}\label{e00}
\dfrac{\rho_{\overline{z}}}{\rho} -\dfrac{D_{\overline{z}}}{2D} +2\Gamma^1_{12} = \nu\alpha(f^{\prime\prime}+\kappa e^{-2f}).
\end{equation}
Now we compute $(K_{e})_{\olz}$, if we take the derivative at \eqref{e2}, we obtain
\begin{equation*}
(K_e)_{\overline{z}} =\dfrac{-2\rho\rho_{\overline{z}}D+\rho^2 D_{\overline{z}}}{D^2}
\end{equation*}
Thus
\begin{equation*}
\dfrac{(K_e)_{\overline{z}}}{2K_e} =\left(\dfrac{-D}{2\rho^2} \right)\dfrac{-2\rho\rho_{\overline{z}}D+\rho^2 D_{\overline{z}}}{D^2}=\dfrac{\rho_{\overline{z}}}{\rho} -\dfrac{D_{\overline{z}}}{2D},
\end{equation*}
Consequently
\begin{equation*}
\dfrac{(K_e)_{\overline{z}}}{2K_e} =\dfrac{\rho_{\overline{z}}}{\rho} -\dfrac{D_{\overline{z}}}{2D}
\end{equation*}
Substituting the last equation into equation \eqref{e00}, we obtain the following equation
\begin{equation*}
\dfrac{(K_e)_{\overline{z}}}{2K_e} +2\Gamma^1_{12} = \nu\alpha(f^{\prime\prime}+\kappa e^{-2f})
\end{equation*}
which shows \eqref{e12}. Next, if $X=\partial_{z}$ in the equation \eqref{33}
\begin{eqnarray*}
    h_{zz} & = & \partial_{z} ( \langle \nabla h, \partial_{z} \rangle ) = \langle \nabla_{\partial_{z}} (\nabla h), \partial_{z} \rangle + \langle \nabla h, \nabla_{\partial_{z}} \partial_{z} \rangle \\
          & = & \langle \nu S(\partial_{z}) + f^{\prime}[\partial_{z}-\langle \partial_{z}, \nabla h \rangle \nabla h], \partial_{z} \rangle + h_{z} \Gamma^{1}_{11} + h_{\overline{z}}\Gamma^{2}_{11} \\
          & = & f^{\prime} E - f^{\prime} (h_{z})^{2} + + h_{z} \Gamma^{1}_{11} + h_{\overline{z}}\Gamma^{2}_{11}
\end{eqnarray*}
and this shows equation \eqref{e12.1}. Also, if   $X=\partial_{\overline{z}}$ in the equation \eqref{33} 

\begin{eqnarray*}
    h_{z \overline{z}} & = & \partial_{\overline{z}} ( \langle \nabla h, \partial_{z} \rangle ) = \langle \nabla_{\partial_{\overline{z}}} (\nabla h), \partial_{z} \rangle + \langle \nabla h, \nabla_{\partial_{\overline{z}}} \partial_{z} \rangle \\
    & = & \langle \nu S(\partial_{\overline{z}}) + f^{\prime}[\partial_{\overline{z}} - \langle \partial_{\overline{z}}, \nabla h \rangle \nabla h], \partial_{z} \rangle +  \langle \nabla h, \nabla_{\partial_{\overline{z}}} \partial_{z} \rangle \\ 
    & = & \rho \nu + f^{\prime} F - f^{\prime} h_{\overline{z}} h_{z} +  h_{z} \Gamma^{1}_{12} + h_{\overline{z}}\Gamma^{2}_{12}.\\ 
\end{eqnarray*}
Then, from equations \eqref{e6}, \eqref{e12} and the equality $\Gamma_{12}^{2} = \overline{\Gamma^{1}_{12}}$, We find the following

\begin{eqnarray*}
    h_{z \overline{z}} & = & \rho \nu + f^{\prime} F - f^{\prime} h_{\overline{z}} h_{z} - \frac{1}{4K_{e}} \Big( (K_{e})_{\overline{z}} h_{z} + (K_{e})_{z} h_{\overline{z}} \Big) + \frac{(f^{\prime \prime} + \kappa e^{-2f})\nu}{2 \rho} \Big( \alpha h_{z} + \overline{\alpha} h_{\overline{z}} \Big) \\
    & = & \rho \nu +  \frac{(f^{\prime \prime} + \kappa e^{-2f})\nu}{2 \rho} D(1-\nu^{2}) - \frac{1}{4K_{e}} \Big( (K_{e})_{\overline{z}} h_{z} + (K_{e})_{z} h_{\overline{z}} \Big) + f^{\prime} F - f^{\prime} h_{\overline{z}} h_{z}
\end{eqnarray*}
and finally from identity $D= \frac{-\rho^{2}}{K_{e}}$, 

\begin{equation*}
    h_{z \overline{z}} = \rho \nu \Big(1-  \frac{(f^{\prime \prime} + \kappa e^{-2f})(1-\nu^{2})}{2 K_{e}}\Big)  - \frac{1}{4K_{e}} \Big( (K_{e})_{\overline{z}} h_{z} + (K_{e})_{z} h_{\overline{z}} \Big) + f^{\prime} F - f^{\prime} h_{\overline{z}} h_{z}
\end{equation*}
and this the equation \eqref{e13} follows. Now, we compute $\alpha_{z}$:

\begin{eqnarray*}
\alpha_z & = & [\overline{E}h_{z} - F h_{\overline{z}}]_{z} = \overline{E}_{z}h_{z} + \overline{E}h_{zz} - F_{z}h_{\overline{z}} - F h_{z\overline{z}} \\
         & = & 2 \langle \nabla_{\partial_{z}} \partial_{\overline{z}} , \partial_{\overline{z}} \rangle h_{z} + \overline{E} h_{zz}- \langle \nabla_{\partial_{z}} \partial_{z} , \partial_{\overline{z}} \rangle h_{\overline{z}}- \langle \partial_{z}, \nabla_{\partial_{z}} \partial_{\overline{z}} \rangle h_{\overline{z}} - F h_{z \overline{z}} \\
         & = & 2 \big( \Gamma^{1}_{12}F + \Gamma^{2}_{12} \overline{E} \big) h_{z} +  \overline{E} h_{zz} - \big( \Gamma^{1}_{11} F + \Gamma^{2}_{11} \overline{E} \big) h_{\overline{z}} - \big( \Gamma^{1}_{12}E + \Gamma^{2}_{12} F \big) h_{\overline{z}}- F h_{z\overline{z}}.
\end{eqnarray*}
Then, from equation \eqref{e12.1}, 

\begin{eqnarray*}
    \alpha_{z} & = & 2 \big( \Gamma^{1}_{12}F + \Gamma^{2}_{12} \overline{E} \big) h_{z} +  \overline{E} \big(    \Gamma_{11}^{1} h_{z} + \Gamma^{2}_{11} h_{\overline{z}} + f^{\prime}E-f^{\prime} \big( h_{z} \big)^{2}  \big) - \big( \Gamma^{1}_{11} F + \Gamma^{2}_{11} \overline{E} \big) h_{\overline{z}} - \big( \Gamma^{1}_{12}E + \Gamma^{2}_{12} F \big) h_{\overline{z}} \\
    & - & F \big( \rho \nu + f^{\prime} F - f^{\prime} h_{\overline{z}} h_{z} +  h_{z} \Gamma^{1}_{12} + h_{\overline{z}}\Gamma^{2}_{12}   \big).\\
    & = & 2\Gamma_{12}^{2}(\overline{E}h_{z}-F h_{\overline{z}}) + \Gamma_{11}^{1} (\overline{E}h_{z} - F h_{\overline{z}}) + \Gamma_{12}^{1}(Fh_{z} - E h_{\overline{z}}) + f^{\prime}D - f^{\prime}h_{z}(\overline{E}h_{z}-Fh_{\overline{z}}) - F \rho \nu \\
    & = & \alpha (\Gamma^{2}_{12}+\Gamma^{1}_{11}) + \alpha \Gamma^{2}_{12} - \overline{\alpha}\Gamma^{1}_{12}+ f'D - f'\alpha h_{z} - F \rho \nu.
\end{eqnarray*}
By the one hand  $\Gamma^{2}_{12} = \overline{\Gamma^{1}_{12}}$ and from equation \eqref{e12}, we get $\alpha \Gamma^{2}_{12} - \overline{\alpha} \Gamma^{2}_{12} = \frac{1}{4K_{e}} \Big( \overline{\alpha} (K_{e})_{\overline{z}} - \alpha (K_{e})_{z} \Big)$. By the other hand, $\Gamma^{1}_{11} = \overline{\Gamma^{2}_{22}}$, then $\Gamma_{12}^{1} + \Gamma^{1}_{11} = \frac{D_{z}}{2D}$. Therefore

\begin{equation*}
    \alpha_{z} = \frac{\alpha D_{z}}{2D} + \frac{1}{4K_{e}} \Big( \overline{\alpha} (K_{e})_{\overline{z}} - \alpha (K_{e})_{z} \Big)  - \rho F \nu - f' (\alpha h_{z} - D)
\end{equation*}
and this shows \eqref{e14}. Finally we compute $\nu_{z \olz}$, from equations \eqref{e11} and \eqref{e14}, we have the following

\begin{eqnarray*}
     v_{z \overline{z}} & = &  (\nu_{\overline{z}})_{z} = \frac{K_{e}}{\rho} \alpha_{z} + (K_{e})_{z} \frac{\alpha}{\rho} - (K_{e}) \alpha \frac{\rho_{z}}{\rho^{2}} - f^{\prime}[\nu_{z} h_{\overline{z}} + \nu h_{z\overline{z}}] - f^{''}\nu \abs{h_{z}}^{2} \\
    & = & \frac{K_{e}}{\rho} \Big( \alpha \frac{D_{z}}{2D} + \frac{1}{4K_{e}} \big( \overline{\alpha}(K_{e})_{\overline{z}} - \alpha (K_{e})_{z} \big) - \rho F \nu + f^{\prime} \big(D - \alpha h_{z} \big) \Big) + (K_{e})_{z} \frac{\alpha}{\rho} - (K_{e}) \alpha \frac{\rho_{z}}{\rho^{2}} - f^{\prime}[\nu_{z} h_{\overline{z}} + \nu h_{z\overline{z}}] - f^{''}\nu \abs{h_{z}}^{2}\\
    & = & \frac{K_{e} \alpha}{\rho} \Big( \frac{D_{z}}{2D} - \frac{\rho_{z}}{\rho} \Big) + (K_{e})_{z} \frac{\alpha}{\rho} + \frac{1}{4 \rho} \Big( \overline{\alpha}(K_{e})_{\overline{z}} - \alpha (K_{e})_{z} \Big) - K_{e} F \nu + \frac{K_{e}f^{\prime}}{\rho} \Big(D - \alpha h_{z} \Big) - f^{\prime}[\nu_{z} h_{\overline{z}} + \nu h_{z\overline{z}}] - f^{''}\nu \abs{h_{z}}^{2}
\end{eqnarray*}
From $\frac{(K_e)_{z}}{2K_e} = \frac{\rho_{z}}{\rho} - \frac{D_{z}}{2D}$, we get

\begin{eqnarray*}
    \nu_{z \overline{z}} & = & -\frac{(K_e)_{z} \alpha}{2\rho} + (K_{e})_{z} \frac{\alpha}{\rho} + \frac{1}{4 \rho} \Big( \overline{\alpha}(K_{e})_{\overline{z}} - \alpha (K_{e})_{z} \Big) - K_{e} F \nu - \frac{K_{e}f^{\prime}}{\rho} \Big( \alpha h_{z} - D \Big) - f^{\prime}[\nu_{z} h_{\overline{z}} + \nu h_{z\overline{z}}] - f^{''}\nu \abs{h_{z}}^{2} \\
    & = & \frac{1}{4 \rho} \Big( \overline{\alpha}(K_{e})_{\overline{z}} + \alpha (K_{e})_{z} \Big) - K_{e} F \nu - \frac{K_{e}f^{\prime}}{\rho} \Big( \alpha h_{z} - D \Big) - f^{\prime}[\nu_{z} h_{\overline{z}} + \nu h_{z\overline{z}}] - f^{''}\nu \abs{h_{z}}^{2} 
\end{eqnarray*}
and this show \eqref{e15}.
\end{proof}

\subsection{Equations for surfaces in \texorpdfstring{$\mrf$}{mrf} with  positive constant mean curvature}
Let $\varphi:\Sigma \looparrowright \mrf$ be an isometric  immersion from a complete, orientable surface having a positive mean curvature $H>0$ for the given orientation $N$. In this case, we can choose a conformal parameter $z$ for the first fundamental form $I$. Hence, the fundamental forms $I$ and $II$ can be written as

\begin{eqnarray*}
\nonumber I & = & \langle d\varphi,d\varphi\rangle=\lambda\vert dz\vert^2 \\ 
\nonumber II & = & -\langle d\varphi,dN\rangle=pdz^2+\lambda H\vert dz\vert^2+\overline{p}d\overline{z}^2 
\end{eqnarray*}
where $pdz^2= \left\langle -\nabla_{\frac{\partial}{\partial_z}}N,\frac{\partial}{\partial_z}   \right\rangle dz^2$ is the Hopf differential of the surface.

Now, we can rewrite the equations for the surface
$\Sigma$ in terms of the conformal parameter $z$ induced by $I$, generalizing what was done in \cite{AEG} and \cite{AEG2}.

\begin{lemma}\label{hl1}
Let $\varphi:\Sigma \looparrowright \mrf$ be an isometric immersion from a complete orientable surface with a mean curvature function $H$. Then, for a conformal parameter $z$ for the first fundamental form, the following equations hold:
\begin{eqnarray}
\label{he4} p_{\overline{z}} &= & \dfrac{\lambda}{2}H_z +\dfrac{\lambda}{2}\left( f^{\prime\prime}+\kappa e^{-2f}\right)\nu h_z \\
\label{he5} \vert\vert \nabla h\vert\vert^2 & = & \dfrac{4}{\lambda}\vert h_z\vert^2 \\
\label{he6} \nu_z & = & -Hh_z -\dfrac{2p}{\lambda}h_{\overline{z}} -f^\prime \nu h_z \\
\label{he7} h_{z\overline{z}} &= & \nu\dfrac{\lambda H}{2}+ f^{\prime}\left[ \dfrac{\lambda}{2}-\vert h_z\vert^2\right] \\
\label{he8} \nu_{z\overline{z}} & = &  -\left( H_{\overline{z}} h_z+H_zh_{\overline{z}} \right) -\dfrac{\lambda\nu}{4}\left[ \left( f^{\prime\prime}+\kappa e^{-2f}\right)(1-\nu^2) +\dfrac{8\vert p\vert^2}{\lambda^2} +2H^2  \right] \\
\nonumber &  & + f^\prime \left[ \dfrac{2p}{\lambda} h_{\overline{z}}^2-H\left(\dfrac{\lambda}{2}-\vert h_z\vert^2 \right) - \left(\nu_{\overline{z}}h_z+\nu h_{z\overline{z}} \right) \right] -f^{\prime\prime}\nu\vert h_z\vert^2
\end{eqnarray}

\end{lemma}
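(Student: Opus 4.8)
The plan is to fix the conformal coordinate $z$ for the first fundamental form throughout, so that $\langle\partial_z,\partial_z\rangle=0$, $\langle\partial_z,\partial_{\overline{z}}\rangle=\lambda/2$ and $\langle\partial_{\overline{z}},\partial_{\overline{z}}\rangle=0$, and to read off from the expression for $II$ that $S\partial_z=H\partial_z+\frac{2p}{\lambda}\partial_{\overline{z}}$ together with its conjugate $S\partial_{\overline{z}}=\frac{2\overline{p}}{\lambda}\partial_z+H\partial_{\overline{z}}$. First I would record the intrinsic Christoffel symbols in this coordinate, namely $\Gamma^1_{11}=\lambda_z/\lambda$, $\Gamma^2_{22}=\lambda_{\overline{z}}/\lambda$, with all the others vanishing, and I would solve the linear system for $T=\grad h$ exactly as in Lemma~\ref{l1}, obtaining $T=\frac{2}{\lambda}\big(h_{\overline{z}}\partial_z+h_z\partial_{\overline{z}}\big)$. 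These two preliminary computations reduce everything that follows to bookkeeping.

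With this data the easy identities come first. Equation~\eqref{he5} is immediate on substituting the expression for $T$ into $\langle T,T\rangle$ and using conformality. Equation~\eqref{he6} comes from setting $X=\partial_z$ in~\eqref{34} and expanding $g(S\partial_z,T)$ with the shape operator above. Equation~\eqref{he7} follows by writing $h_{z\overline{z}}=\partial_{\overline{z}}\langle T,\partial_z\rangle=\langle\nabla_{\partial_{\overline{z}}}T,\partial_z\rangle$, where the term $\langle T,\nabla_{\partial_{\overline{z}}}\partial_z\rangle$ drops out because the mixed symbols $\Gamma^1_{12},\Gamma^2_{12}$ vanish, and then inserting~\eqref{33} with $X=\partial_{\overline{z}}$. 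For the Codazzi identity~\eqref{he4} I would take $X=\partial_z$, $Y=\partial_{\overline{z}}$ in the Codazzi equation~\eqref{codazzi}, so that $[\partial_z,\partial_{\overline{z}}]=0$ and $\mathcal{S}(\partial_z,\partial_{\overline{z}})=-\nu(f''+\kappa e^{-2f})[h_z\partial_{\overline{z}}-h_{\overline{z}}\partial_z]$; expanding $\nabla_{\partial_z}(S\partial_{\overline{z}})-\nabla_{\partial_{\overline{z}}}(S\partial_z)$ with the Christoffel symbols above, where the $\lambda_z/\lambda$ contributions combine neatly with the derivatives of $2\overline{p}/\lambda$ and $2p/\lambda$, and reading off the $\partial_{\overline{z}}$-component yields~\eqref{he4}.

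The genuinely laborious step is~\eqref{he8}, which I would obtain by differentiating~\eqref{he6}, that is, by computing $\nu_{z\overline{z}}=\partial_{\overline{z}}\nu_z$ term by term and then eliminating every second-order quantity through back-substitution, remembering that $\partial_{\overline{z}}f'=f''h_{\overline{z}}$ by the chain rule. Along the way one meets $h_{\overline{z}\,\overline{z}}$, which is not listed in the statement, so I would first derive the auxiliary formula $h_{zz}=\nu p-f'h_z^2+\frac{\lambda_z}{\lambda}h_z$ (and its conjugate) by the same argument that gives~\eqref{e12.1}. The substitutions that make the expression collapse are: \eqref{he4} to rewrite $p_{\overline{z}}$; the conjugate of the auxiliary formula to rewrite $h_{\overline{z}\,\overline{z}}$; \eqref{he7} to rewrite the single $-H\,h_{z\overline{z}}$ term (the other occurrence of $h_{z\overline{z}}$ must be left inside the $f'$-bracket); and, crucially, the identity $|h_z|^2=\frac{\lambda}{4}(1-\nu^2)$, which is forced by combining~\eqref{he5} with the normalization~\eqref{35}.

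I expect the only real obstacle to be this bookkeeping. The points where care is needed are: the several $\lambda_{\overline{z}}/\lambda$ contributions must be seen to cancel in pairs; the identity $|h_z|^2=\frac{\lambda}{4}(1-\nu^2)$ must be recognized in order to convert the stray term $(f''+\kappa e^{-2f})\nu|h_z|^2$ into the $\frac{\lambda\nu}{4}(f''+\kappa e^{-2f})(1-\nu^2)$ appearing in the statement; and the $H^2$ and $|p|^2$ pieces, produced respectively from $-H\,h_{z\overline{z}}$ via~\eqref{he7} and from $-\frac{2p}{\lambda}h_{\overline{z}\,\overline{z}}$ via the auxiliary formula, must be assembled into the bracket of~\eqref{he8}. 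No individual step is conceptually hard; the difficulty is entirely in the accounting.
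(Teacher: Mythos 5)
Your proposal is correct: the shape operator $S\partial_z=H\partial_z+\frac{2p}{\lambda}\partial_{\overline{z}}$, the Christoffel symbols, $T=\frac{2}{\lambda}\bigl(h_{\overline{z}}\partial_z+h_z\partial_{\overline{z}}\bigr)$, and the auxiliary identity $h_{zz}=\nu p-f'h_z^2+\frac{\lambda_z}{\lambda}h_z$ all check out, and differentiating \eqref{he6} with the substitutions you list (including $\vert h_z\vert^2=\frac{\lambda}{4}(1-\nu^2)$ from \eqref{he5} and \eqref{35}, and the cancellation of the $\lambda_{\overline{z}}/\lambda$ terms against $h_{\overline{z}\overline{z}}$) assembles \eqref{he8} exactly. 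The paper states this lemma without proof, but your argument is precisely the first-fundamental-form analogue of the computation carried out for Lemma \ref{l2} (taking $X=\partial_z$, $Y=\partial_{\overline{z}}$ in the compatibility equations of Theorem \ref{wd1}), so it is essentially the paper's approach.
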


\section{ Geometrical Applications to theory of surfaces of constant curvature in   \texorpdfstring{$\mathbb{R} \times_{f} \mathbb{R}^{2}$}.}

In this section , we apply the necessary equations calculated in the previous section to obtain some consequences for surfaces with constant mean curvature or constant extrinsic mean curvature in $\mathbb{R} \times_{f} \mathbb{R}^{2}$. Before that, we recall the following result: 

\begin{lemma}
Let $(M,g)$ be a Riemannian manifold a suppose that $\hat{g} = cg$ where $c$ a postie constant, then $\Delta_{\hat{g}} = \frac{1}{c} \Delta_{g}$  
\end{lemma}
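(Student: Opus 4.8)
The plan is to use the intrinsic decomposition of the Laplace--Beltrami operator as $\Delta_g = \div_g \circ \grad_g$ and to track separately how the gradient and the divergence transform under the homothety $\hat g = cg$. This keeps the gradient computation coordinate-free, while for the divergence I would invoke its local coordinate expression to see that a constant conformal factor leaves it unchanged. (Equivalently, one could argue entirely in local coordinates from the single formula for $\Delta_g$; I sketch that shortcut at the end.)

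First I would compute the gradient. By definition $\grad_g u$ is the vector field metrically dual to $du$, that is, $g(\grad_g u, X) = du(X)$ for every vector field $X$. Writing the same relation for $\hat g$ and using $\hat g = cg$ gives $c\,g(\grad_{\hat g}u, X) = du(X) = g(\grad_g u, X)$ for all $X$, whence $\grad_{\hat g} u = \tfrac1c \grad_g u$ by nondegeneracy of $g$.

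Next I would check that the divergence is insensitive to a constant rescaling. In local coordinates $\div_g X = \tfrac{1}{\sqrt{|g|}}\,\partial_i\big(\sqrt{|g|}\,X^i\big)$, where $|g| = \det(g_{ij})$. Since $\hat g_{ij} = c\,g_{ij}$ one has $|\hat g| = c^{\,n}|g|$ in dimension $n$, so $\sqrt{|\hat g|} = c^{\,n/2}\sqrt{|g|}$; because $c^{\,n/2}$ is a constant it factors out of the derivative and cancels against the prefactor, giving $\div_{\hat g}X = \div_g X$ for every $X$. Combining the two computations yields
\[
\Delta_{\hat g}u = \div_{\hat g}\!\big(\grad_{\hat g}u\big) = \div_g\!\big(\tfrac1c\grad_g u\big) = \tfrac1c\,\div_g(\grad_g u) = \tfrac1c\,\Delta_g u.
\]

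There is no genuine obstacle here; the only point requiring a little care is the divergence step, where one might worry that the dimension-dependent factor $c^{\,n/2}$ from the volume element spoils the cancellation. Since it is constant, however, it commutes with $\partial_i$ and cancels exactly against $\tfrac{1}{\sqrt{|\hat g|}}$, so no residual power of $c$ survives. As an alternative one-line derivation, the claim follows directly from the coordinate identity $\Delta_g u = \tfrac{1}{\sqrt{|g|}}\,\partial_i\big(\sqrt{|g|}\,g^{ij}\partial_j u\big)$ by substituting $\hat g^{ij} = \tfrac1c g^{ij}$ together with $\sqrt{|\hat g|} = c^{\,n/2}\sqrt{|g|}$ and pulling the constant $c^{\,n/2}/c$ through the derivative.
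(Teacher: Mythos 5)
Your proof is correct: the gradient scales as $\grad_{\hat g}u=\tfrac1c\grad_g u$, the divergence is unchanged because the constant factor $c^{n/2}$ in $\sqrt{|\hat g|}$ commutes with $\partial_i$ and cancels, and composing the two gives $\Delta_{\hat g}=\tfrac1c\Delta_g$. For comparison, the paper offers no proof at all of this lemma --- it is merely ``recalled'' as a known fact before being used --- so your argument is not an alternative route but a complete justification of a step the paper takes for granted, and both of your variants (the $\div\circ\grad$ decomposition and the one-line coordinate computation) are sound.
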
 

In order to prove our first result, we establish the following technical lemma, which is valid for surfaces with positive constant extrinsic curvature immersed in $\mathbb{R} \times_{f} \mathbb{R}^{2}$. 

\begin{lemma}
\label{13}
Let  $\varphi:\Sigma \looparrowright \rrf$ be an immersion from a complete, orientable surface, having constant positive extrinsic curvature $K_e$ and consider a conformal parameter $z$ for the second fundamental form $II$. Then, the function $g(\nu) = \frac{1}{\sqrt{K_{e}}}e^{f}\nu$ satisfies

\begin{equation}
\label{eq16g}
g_{z\overline{z}} = \frac{e^{f}}{\sqrt{K_{e}}} \Big( -K_{e}F\nu + \frac{K_{e}f'D}{\rho} \Big)
\end{equation}
\end{lemma}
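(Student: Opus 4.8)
The plan is to compute $g_{z\overline{z}}$ directly, by differentiating twice and then inserting the structure equations of Lemma~\ref{l2} specialized to the present situation. First I would record the two features that make the ambient space $\rrf$ special: here $\kappa=0$, and since $K_e$ is a positive \emph{constant} one has $(K_e)_z=(K_e)_{\overline{z}}=0$. With these, equations \eqref{e11}, \eqref{e13} and \eqref{e15} collapse to
\begin{align*}
\nu_z &= \frac{\overline{\alpha}K_e}{\rho}-f'\nu h_z, \\
h_{z\overline{z}} &= \rho\nu\Big(1-\frac{f''(1-\nu^2)}{2K_e}\Big)-f'(F-h_{\overline{z}}h_z), \\
\nu_{z\overline{z}} &= -K_eF\nu-\frac{K_ef'}{\rho}(\alpha h_z-D)-f'\big[\nu_z h_{\overline{z}}+\nu h_{z\overline{z}}\big]-f''\nu\abs{h_z}^2 ;
\end{align*}
and, conjugating the first line (recall $\nu,\rho,F,f',f''$ are real, $\overline{h_z}=h_{\overline{z}}$, and $\overline{\alpha}$ is the conjugate of $\alpha$), $\nu_{\overline{z}}=\frac{\alpha K_e}{\rho}-f'\nu h_{\overline{z}}$.

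Next I would differentiate. Since $f$ is evaluated along the surface at $t=h$, we have $\partial_z e^{f}=e^{f}f'h_z$; factoring out the constant $1/\sqrt{K_e}$ gives $g_z=\frac{e^{f}}{\sqrt{K_e}}\big(f'\nu h_z+\nu_z\big)$, and a further differentiation in $\overline{z}$ produces
\begin{equation*}
g_{z\overline{z}}=\frac{e^{f}}{\sqrt{K_e}}\Big[(f')^2\nu h_zh_{\overline{z}}+f'h_{\overline{z}}\nu_z+f''\nu h_zh_{\overline{z}}+f'\nu h_{z\overline{z}}+f'h_z\nu_{\overline{z}}+\nu_{z\overline{z}}\Big].
\end{equation*}

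The crucial step is the substitution of $\nu_{z\overline{z}}$. The three correction terms $-f'\nu_z h_{\overline{z}}$, $-f'\nu h_{z\overline{z}}$ and $-f''\nu\abs{h_z}^2$ coming from the reduced \eqref{e15} cancel \emph{exactly} against the terms $f'h_{\overline{z}}\nu_z$, $f'\nu h_{z\overline{z}}$ and $f''\nu h_zh_{\overline{z}}$ produced by the product rule, leaving only $(f')^2\nu h_zh_{\overline{z}}+f'h_z\nu_{\overline{z}}-K_eF\nu-\frac{K_ef'}{\rho}(\alpha h_z-D)$ inside the bracket. I would then substitute $\nu_{\overline{z}}$: the product $f'h_z\nu_{\overline{z}}=\frac{K_ef'\alpha h_z}{\rho}-(f')^2\nu h_zh_{\overline{z}}$ kills the surviving $(f')^2$ term, while its $\alpha h_z$ part cancels $-\frac{K_ef'}{\rho}\alpha h_z$. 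What remains is precisely $-K_eF\nu+\frac{K_ef'D}{\rho}$, which yields \eqref{eq16g}.

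I expect the only genuine difficulty to be the bookkeeping: every term carrying $f'$ or $f''$ must be followed through two successive rounds of cancellation, and one must trust that \eqref{e15} has been written so that its $f'$- and $f''$-corrections are exactly the negatives of what differentiating $e^{f}\nu h_z$ and $e^{f}\nu_z$ generates. There is no conceptual obstacle here — the identity is engineered so that all derivatives of the height function disappear — but an error would most easily arise in a sign or in the reality assumptions used to pass from $\nu_z$ to $\nu_{\overline{z}}$.
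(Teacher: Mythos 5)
Your proposal is correct and follows essentially the same route as the paper's proof: differentiate $g$ twice (using $\partial_z e^f = e^f f' h_z$), substitute the reduced form of \eqref{e15} with $\kappa=0$ and $(K_e)_z=(K_e)_{\overline z}=0$ so that the $f'$- and $f''$-correction terms cancel against the product-rule terms, then substitute $\nu_{\overline z}$ from \eqref{e11} to kill the remaining $(f')^2$ and $\alpha h_z$ terms. Your explicit remarks on conjugation and on why $(K_e)_z$ vanishes are details the paper leaves implicit, but the argument is identical.
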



\begin{proof}
First, $\sqrt{K_{e}}g_{z} = e^{f} \big( f'h_{z} \nu + \nu_{z} \big)$. Since $(fh_{z})_{\overline{z}} = f''|h_{z}|^{2} + f'h_{z\overline{z}}$ and from the formula for $\nu_{z \overline{z}}$ in equation \eqref{e15}, we have the following:

\begin{eqnarray*}
    \sqrt{K_{e}}g_{z\olz} & = & e^{f}f'h_{\olz} \big( f'h_{z} \nu + \nu_{z} \big) + e^{f} \big( ( f''|h_{z}|^{2} + f'h_{z\overline{z}}) \nu + f'h_{z}\nu_{\olz} + \nu_{z\olz}\big) \\
    & = & e^{f} \Big[] (f')^{2} \nu |h_{z}|^{2} + f'h_{\olz} \nu_{z} + f'' \nu |h_{z}|^{2} + f'\nu h_{z\olz} + f'h_{z}\nu_{\olz}  - K_{e} F \nu - \frac{K_{e}f^{\prime}}{\rho} (\alpha h_{z}  - D ) \\
    & & - f^{\prime} (\nu_{z} h_{\overline{z}} + \nu h_{z\overline{z}}) - f^{''}\nu \abs{h_{z}}^{2}\Big] \\
    & = &  e^{f} \Big( (f')^{2} \nu |h_{z}|^{2} +  f'h_{z}\nu_{\olz}  - K_{e} F \nu - \frac{K_{e}f^{\prime}}{\rho} (\alpha h_{z}  - D )  \Big).
\end{eqnarray*}
Finally, from equation \eqref{e11},  $\nu_{\olz}  =  \dfrac{\alpha K_e}{\rho} -f^\prime\nu h_{\olz}$. Then:

\begin{eqnarray*}
    \sqrt{K_{e}}g_{z\olz} & = & e^{f} \Big( (f')^{2} \nu |h_{z}|^{2} +  f'h_{z}\Big(  \dfrac{\alpha K_e}{\rho} -f^\prime\nu h_{\olz} \Big)  - K_{e} F \nu - \frac{K_{e}f^{\prime}}{\rho} \Big(\alpha h_{z}  - D \Big)  \Big) \\
    & = &  e^{f} \Big(  - K_{e} F \nu  +  + \frac{K_{e}f'D}{\rho}  \Big)
\end{eqnarray*}
and lemma follows.   
\end{proof}

Our first result is a height estimate for surfaces with constant positive extrinsic curvature immersed in the warped product $\rrf$, under certain assumptions about the warping function $f$. This result, in particular, extends the result obtained in \cite{EGR} for this class of surfaces immersed in some product spaces.

\begin{theorem} 
\label{HET}
Let $\Sigma \looparrowright \mathbb{R} \times_{f} \mathbb{R}^{2} $  be a compact graph in a domain of $\mathbb{R}^{2}$ where $f$ is a non-negative warping function, such that the first derivative $f'$ is non-positive and the second derivative $f''$ is non-negative. Suppose that $\Sigma$  has a positive constant extrinsic curvature $K_{e}$ and the boundary $\partial \Sigma$ is a subset of the slice $\{ 0 \} \times \mathbb{R}^{2}$. Then, the height function $h: \Sigma \rightarrow \mathbb{R}$ satisfies
\begin{equation*}
    h(p) \leq \frac{e^{f(0)}}{\sqrt{K_{e}}}
\end{equation*}
for each $p \in \Sigma$.  
\end{theorem}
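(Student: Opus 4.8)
The plan is to derive the estimate from the pointwise comparison $h \le g$ on all of $\Sigma$, where $g = \frac{1}{\sqrt{K_e}}e^{f}\nu$ is the function studied in Lemma \ref{13}. I would first fix the orientation with $\nu > 0$ (possible since $\Sigma$ is a graph), so that $h \equiv 0$ on $\partial\Sigma$ and, by \eqref{35}, $\nu = 1$ at every interior critical point of $h$. Granting $h \le g$, the theorem is immediate: at a point with $h(p) \ge 0$ the hypothesis $f' \le 0$ gives $f(h(p)) \le f(0)$, and $\nu \le 1$ by \eqref{35}, so $h(p) \le g(p) = \frac{e^{f(h(p))}\nu}{\sqrt{K_e}} \le \frac{e^{f(0)}}{\sqrt{K_e}}$; at a point with $h(p) < 0$ the bound is trivial because the right-hand side is positive.

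To prove $h \le g$ I would work in a conformal parameter $z$ for the second fundamental form, so that $II = 2\rho\,\abs{dz}^2$ with $\rho > 0$, and set $\phi = h - g$. By the preliminary lemma on conformal rescaling of the Laplacian, $\phi$ is subharmonic for the metric $II$ exactly when $\phi_{z\overline{z}} \ge 0$. If this holds, the maximum principle gives $\phi \le \max_{\partial\Sigma}\phi$; but on $\partial\Sigma \subset \{0\}\times\mathbb{R}^{2}$ one has $h = 0$ and $g = \frac{e^{f(0)}}{\sqrt{K_e}}\nu \ge 0$, so $\phi \le 0$ on the boundary, and therefore $\phi \le 0$ on $\Sigma$, i.e. $h \le g$.

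Everything thus reduces to the inequality $\phi_{z\overline{z}} = h_{z\overline{z}} - g_{z\overline{z}} \ge 0$. Here I would substitute $h_{z\overline{z}}$ from \eqref{e13} (with $\kappa = 0$ and $K_e$ constant) and $g_{z\overline{z}}$ from Lemma \ref{13}, simplified through $D = -\rho^{2}/K_e$ from \eqref{e2} and $F = H\rho/K_e$ from \eqref{e3}. The argument rests on three structural facts: the arithmetic--geometric mean inequality $H \ge \sqrt{K_e}$ (both principal curvatures are positive); the identity $F - \abs{h_z}^{2} = F\nu^{2} + \abs{\alpha}^{2}/\abs{D} \ge 0$, read off from \eqref{e9} together with $D < 0$, which makes the first-order term $-f'(F - \abs{h_z}^{2})$ nonnegative; and the sign hypotheses on $f$, with $f'' \ge 0$ entering through the curvature term $-\frac{\rho\nu f''(1-\nu^{2})}{2K_e}$ and $f' \le 0$ through the remaining first-order contributions. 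In the product case $f \equiv 0$ of \cite{EGR} this collapses to $\phi_{z\overline{z}} = \nu\big(\rho + \sqrt{K_e}\,F\big) > 0$, with no further effort.

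I expect this last sign verification to be the main obstacle. After substitution, $\phi_{z\overline{z}}$ splits into a manifestly positive block $\nu\big(\rho + e^{f}\sqrt{K_e}\,F\big)$, the negative curvature term $-\frac{\rho\nu f''(1-\nu^{2})}{2K_e}$, and a first-order remainder which, after using \eqref{e11} and $K_e F = H\rho$, carries the factor $H\nu + f'$ whose sign is not manifest. The delicate point is to show that, once $H \ge \sqrt{K_e}$, $e^{f} \ge 1$ and $F \ge \abs{h_z}^{2}$ are invoked, the positive contributions absorb both the $f''$ term and this first-order remainder, so that $\phi_{z\overline{z}} \ge 0$. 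This is exactly where the three conditions $f \ge 0$, $f' \le 0$, $f'' \ge 0$ must be used in concert, and where the warped setting genuinely departs from the product model.
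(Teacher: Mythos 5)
Your high-level plan coincides with the paper's proof of Theorem \ref{HET}: compare $h$ with the auxiliary function $\pm\frac{1}{\sqrt{K_e}}e^{f}\nu$ of Lemma \ref{13}, compute the Laplacian in the conformal parameter of $II$, and conclude by the maximum principle together with $f'\le 0$ and $\abs{\nu}\le 1$. But there is a genuine gap: the whole content of the theorem is the sign verification $\phi_{z\overline{z}}\ge 0$, and you leave exactly that step open, describing a ``first-order remainder'' with factor $H\nu+f'$ ``whose sign is not manifest'' that you hope the positive blocks will absorb. No such absorption argument is given, so what you have is the (correct) maximum-principle reduction, not a proof.

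The obstacle you ran into is self-inflicted, caused by an inconsistent choice of signs. You cannot simultaneously impose $\nu>0$ and $II=2\rho\abs{dz}^2$ with $\rho>0$: all the formulas you want to quote (\eqref{e2}, \eqref{e3}, \eqref{e9}, \eqref{e13}, Lemma \ref{13}) are derived for the orientation making $II$ positive definite ($\rho>0$, $F>0$, $H>0$), and for a graph lying above the slice $\{0\}\times\R^{2}$ (the only nontrivial case; below the slice the estimate is vacuous) that orientation forces $\nu\le 0$, since the normal with positive principal curvatures points downwards. With the correct sign $\nu\le 0$ one studies $\phi=h+\frac{1}{\sqrt{K_e}}e^{f}\nu$, and \eqref{e13} plus Lemma \ref{13} give, for $\kappa=0$ and $K_e$ constant,
\begin{equation*}
\phi_{z\overline{z}}
=\underbrace{\sqrt{K_e}\,\nu\left(\sqrt{-D}-e^{f}F\right)}_{\ge 0}
\;\underbrace{-\;\frac{\rho\nu f''(1-\nu^{2})}{2K_e}}_{\ge 0}
\;\underbrace{-\;f'\left(F-\abs{h_z}^{2}\right)}_{\ge 0}
\;+\;\underbrace{\frac{e^{f}\sqrt{K_e}\,f'D}{\rho}}_{\ge 0},
\end{equation*}
where the first term uses $\rho=\sqrt{K_e}\sqrt{-D}$, $\sqrt{-D}\le F\le e^{f}F$ (from $D=\abs{E}^{2}-F^{2}<0$ and $f\ge 0$) and $\nu\le 0$; the second uses $\nu\le 0$ and $f''\ge 0$; the third uses \eqref{e9}, which yields $F-\abs{h_z}^{2}=F\nu^{2}-\abs{\alpha}^{2}/D\ge 0$, and $f'\le 0$; and the fourth uses $f'\le 0$, $D<0$, $\rho>0$. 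Every term is separately nonnegative: no absorption between blocks, no inequality $H\ge\sqrt{K_e}$, and no appeal to \eqref{e3} is needed. Under your convention the first and fourth terms trade signs with the second, which is precisely why you were left balancing a term in $H\nu+f'$ that simply does not occur in the consistent computation; the same inconsistency shows up in your product-case check, where positivity seems to come for free, whereas in \cite{EGR} it comes from the product of the two nonpositive quantities $\nu$ and $\sqrt{-D}-F$.
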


\begin{proof} 
We compute the Laplacian of the function $h + g(v)$ with respect to the parameter $z$ induced by the second fundamental form $II$, where $g(\nu) = \frac{1}{\sqrt{K_{e}}} e^{f}\nu$. Then, from equation \eqref{e13} and lemma \ref{13}, we get 

\begin{eqnarray*}
    \Big( h + g(v) \Big)_{z\olz} & = & \rho \nu \Big( 1 - \frac{ f^{\prime\prime}(1-\nu^{2})}{2 K_{e}} \Big) - f^{\prime} (F - h_{\overline{z}}h_{z}) + \frac{e^{f}}{\sqrt{K_{e}}} \Big( -K_{e}F\nu + \frac{K_{e}f'D}{\rho} \Big)\\
    & = & \rho \nu - \sqrt{K_{e}} e^{f} F \nu - \frac{ \rho \nu f^{\prime\prime}(1-\nu^{2})}{2 K_{e}} - f^{\prime} (F - \vert h_{\overline{z}} \vert^{2}) + \frac{e^{f} \sqrt{K_{e}} f' D}{\rho}.
\end{eqnarray*}
We proceed to estimate the last equality. By the one hand:

\begin{itemize}
    \item Since $\nu \leq 0$, $f'' \geq 0$ and $K_{e} >0$, then $- \frac{ \rho \nu f^{\prime\prime}(1-\nu^{2})}{2 K_{e}} \geq 0$,
    \item $f' \leq 0$ and $D < 0$, then  $\frac{e^{f} \sqrt{K_{e}} f' D}{\rho} \geq 0$ and 
    \item from equation \eqref{e9} and $F>0$, we get $-f^{\prime} (F - \vert h_{\overline{z}} \vert^{2}) = - f^{'} \big( F \nu^{2} - \frac{\vert \alpha \vert^{2}}{D} \big) \geq 0$.
\end{itemize}
On the other hand, from equation \eqref{e2}, $\rho = \sqrt{-D} \sqrt{K_{e}}$, hence

\begin{equation*}
    \rho \nu - \sqrt{K_{e}} e^{f} F \nu  = \sqrt{K_{e}} \nu \big( \sqrt{-D} - e^{f} F\big)  =  \sqrt{K_{e}} \nu \big( \sqrt{F^{2} - \vert E \vert^{2}} - e^{f} F \big)
     \geq  \sqrt{K_{e}} \nu F \big( 1 - e^{f} ) \geq 0
\end{equation*}
The last inequality follows from the fact that $f\geq 0$ and $\nu \leq 0 $. Therefore, $\Delta_{II} (h + g(\nu)) \geq 0$ on $\Sigma$ but $h+g(\nu) \leq 0$ on $\partial \Sigma$. So, by the maximum principle $h + g(\nu) \leq 0$ in $\Sigma$, therefore  $h(p) \leq \frac{e^{f(0)}}{\sqrt{K_{e}}}$ for each point $p \in \Sigma$.
\end{proof}

The second result is a height estimate for surfaces with constant mean curvature immersed in $\rrf$, for which we give a lemma similar to the lemma \ref{13}. 

\begin{lemma}
\label{15}
Let  $\varphi:\Sigma \looparrowright \rrf$ be an immersion from a complete, orientable surface with constant mean curvature $H$ and consider a conformal parameter $z$ for the second fundamental form $I$, then for the function $g(\nu) = \frac{1}{H}e^{f}\nu$, we have the following 

\begin{equation}
\label{eqle2}
g_{z\overline{z}} = \frac{e^{f}}{H}\Big( \frac{-\lambda \nu}{4}\Big( f^{\prime\prime}(1-\nu^2)  + \frac{8 \vert p \vert^{2} }{\lambda^{2}} +2H^{2} \Big) - f^{\prime}H\frac{\lambda}{2} \Big)
\end{equation}
\end{lemma}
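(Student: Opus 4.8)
The plan is to mirror the proof of Lemma \ref{13}, now using the equations of Lemma \ref{hl1} for the conformal parameter induced by the first fundamental form, and recalling that in $\rrf$ we have $\kappa=0$, so that $f''+\kappa e^{-2f}=f''$, and that $H$ is constant, so $H_z=H_{\overline{z}}=0$. Since $f=f(t)$ and the height function is $t=h$, one has $f_z=f'h_z$ and $(f')_{\overline{z}}=f''h_{\overline{z}}$; because $h$ is real, $h_zh_{\overline{z}}=\abs{h_z}^2$. First I would write $Hg=e^f\nu$ and compute $Hg_z=e^f(f'h_z\nu+\nu_z)$. Differentiating once more in $\overline{z}$ by the product rule, using $(f'h_z)_{\overline{z}}=f''\abs{h_z}^2+f'h_{z\overline{z}}$, gives
\begin{equation*}
Hg_{z\overline{z}}=e^f\big[(f')^2\abs{h_z}^2\nu+f'h_{\overline{z}}\nu_z+f''\abs{h_z}^2\nu+f'\nu h_{z\overline{z}}+f'h_z\nu_{\overline{z}}+\nu_{z\overline{z}}\big].
\end{equation*}

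Next I would substitute $\nu_{z\overline{z}}$ from \eqref{he8} with $\kappa=0$ and $H_z=H_{\overline{z}}=0$. The three terms $f''\abs{h_z}^2\nu$, $f'\nu h_{z\overline{z}}$ and $f'h_z\nu_{\overline{z}}$ cancel immediately against the matching terms $-f''\nu\abs{h_z}^2$, $-f'\nu h_{z\overline{z}}$ and $-f'\nu_{\overline{z}}h_z$ produced by \eqref{he8}, leaving only the curvature term $-\frac{\lambda\nu}{4}\big(f''(1-\nu^2)+\frac{8\abs{p}^2}{\lambda^2}+2H^2\big)$, the $f'$-term $f'(\frac{2p}{\lambda}h_{\overline{z}}^2-H(\frac{\lambda}{2}-\abs{h_z}^2))$, and the two surviving summands $(f')^2\abs{h_z}^2\nu+f'h_{\overline{z}}\nu_z$.

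For the second round I would substitute $\nu_z=-Hh_z-\frac{2p}{\lambda}h_{\overline{z}}-f'\nu h_z$ from \eqref{he6} into $f'h_{\overline{z}}\nu_z$, obtaining $-f'H\abs{h_z}^2-f'\frac{2p}{\lambda}h_{\overline{z}}^2-(f')^2\nu\abs{h_z}^2$. Here $-(f')^2\nu\abs{h_z}^2$ cancels $(f')^2\abs{h_z}^2\nu$, and $-f'\frac{2p}{\lambda}h_{\overline{z}}^2$ cancels the leftover $f'\frac{2p}{\lambda}h_{\overline{z}}^2$. Collecting the remaining $f'H$ contributions, $-f'H\abs{h_z}^2-f'H\frac{\lambda}{2}+f'H\abs{h_z}^2=-f'H\frac{\lambda}{2}$, which yields
\begin{equation*}
Hg_{z\overline{z}}=e^f\Big[-\frac{\lambda\nu}{4}\Big(f''(1-\nu^2)+\frac{8\abs{p}^2}{\lambda^2}+2H^2\Big)-f'H\frac{\lambda}{2}\Big],
\end{equation*}
and dividing by $H$ gives \eqref{eqle2}.

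The computation is entirely routine once \eqref{he6}, \eqref{he7} and \eqref{he8} are in hand; the only real obstacle is \emph{bookkeeping}, namely keeping track of the many terms and verifying that every cancellation above is exact. The underlying mechanism is the same as in Lemma \ref{13}: differentiating $e^f\nu$ twice manufactures precisely the extra $f'$- and $f''$-terms that reappear with the opposite sign inside $\nu_{z\overline{z}}$, so that all the warping corrections annihilate and only the intrinsic curvature term and a single $f'$-term survive.
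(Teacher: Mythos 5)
Your computation is correct: every cancellation you claim (the $f''\nu\abs{h_z}^2$, $f'\nu h_{z\overline{z}}$, $f'h_z\nu_{\overline{z}}$, $(f')^2\nu\abs{h_z}^2$ and $f'\tfrac{2p}{\lambda}h_{\overline{z}}^2$ pairs) is exact, and the surviving terms assemble to \eqref{eqle2} after dividing by $H$. The paper actually states Lemma \ref{15} without proof, remarking only that it is ``similar to Lemma \ref{13}''; your argument is precisely that intended mirroring -- differentiating $e^f\nu$ twice and substituting \eqref{he6} and \eqref{he8} with $\kappa=0$ and $H$ constant -- so it fills the gap in exactly the paper's spirit.
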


The previous lemma allows us to estimate the height of surfaces with constant mean curvature in the warped product $\mathbb{R} \times_{f} \mathbb{R}^{2}$. This estimate is similar to the one derived in the case of constant positive extrinsic curvature.

\begin{theorem}{{\bf Height estimates for CMC immersed surfaces in $
\mathbb{R} \times_{f} \mathbb{R}^{2}$.} } 
\label{HET2}
Let $\Sigma \looparrowright \mathbb{R} \times_{f} \mathbb{R}^{2} $  be a compact graph on a domain of $\mathbb{R}^{2}$ where $f$ is a non-negative warping function, such that $f'$ is non-positive and $f''$ is non-negative. Moreover, suppose that $\Sigma$ has a positive constant mean curvature $H$ and that the boundary $\partial \Sigma$ is contained in the slice $\mathbb{R}^{2} \times \{0 \}$. Then, the height function $h: \Sigma \rightarrow \mathbb{R}$ satisfies
\begin{equation*}
h(p) \leq  \frac{e^{f(0)}}{H}
\end{equation*}
for each $p \in \Sigma$.
\end{theorem}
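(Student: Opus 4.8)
The plan is to mirror the proof of Theorem \ref{HET} exactly, replacing the extrinsic-curvature auxiliary function with the mean-curvature one. I would compute the Laplacian $\Delta_I = \frac{4}{\lambda}\partial_z\partial_{\olz}$ of the function $h + g(\nu)$, where now $g(\nu) = \frac{1}{H}e^f \nu$ and $z$ is the conformal parameter for the \emph{first} fundamental form. The key inputs are equation \eqref{he7} for $h_{z\olz}$ and Lemma \ref{15} for $g_{z\olz}$. Adding these, the terms $\nu \frac{\lambda H}{2}$ from \eqref{he7} and the $+2H^2$ contribution inside \eqref{eqle2} should combine so that the leading-order $\rho\nu$-type term is controlled, just as the $\rho\nu - \sqrt{K_e}e^f F\nu$ combination was in the previous theorem.

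The main work is the sign analysis of $(h+g(\nu))_{z\olz}$. First I would fix the orientation so that $\nu \le 0$ on the graph (this is the standard choice for an upward-pointing graph with the given boundary placement). Then I would go through each surviving term and argue it is non-negative: the term $-\frac{\lambda\nu}{4}f''(1-\nu^2)$ is $\ge 0$ since $f''\ge 0$, $\lambda>0$, $\nu\le 0$ and $1-\nu^2 = |T|^2 \ge 0$ by \eqref{35}; the term involving $-f'H\frac{\lambda}{2}$ (equivalently the $\frac{e^f}{H}\cdot(-f'H\frac{\lambda}{2})$ piece) is $\ge 0$ since $f'\le 0$ and $H>0$; and the $\frac{8|p|^2}{\lambda^2}$ term is manifestly non-negative against $-\nu\ge 0$. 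The delicate step is the analogue of the displayed inequality $\rho\nu - \sqrt{K_e}e^f F\nu \ge 0$: here I expect the combination of the $\frac{\lambda H}{2}\nu$ from $h_{z\olz}$ with the $\frac{e^f}{H}\cdot(-\frac{\lambda\nu}{4}\cdot 2H^2) = -\frac{\lambda\nu H e^f}{2}$ term from $g_{z\olz}$ to produce $\frac{\lambda H \nu}{2}(1 - e^f)$, which is $\ge 0$ because $f\ge 0$ forces $e^f \ge 1$ and $\nu\le 0$.

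Once every term is shown non-negative, I conclude $\Delta_I\big(h+g(\nu)\big) \ge 0$ on $\Sigma$, so $h+g(\nu)$ is subharmonic. On the boundary $\partial\Sigma \subset \mathbb{R}^2\times\{0\}$ we have $h=0$ and $\nu\le 0$, hence $h+g(\nu) = \frac{1}{H}e^{f(0)}\nu \le 0$ there. By the maximum principle for subharmonic functions on a compact surface with boundary, $h+g(\nu)\le 0$ throughout $\Sigma$. Since $|\nu|\le 1$ by \eqref{35}, this gives $h(p) \le -g(\nu(p)) = -\frac{1}{H}e^{f(h(p))}\nu(p) \le \frac{1}{H}e^{f(h(p))}$; combined with the monotonicity $f'\le 0$ so that $e^{f(h(p))}\le e^{f(0)}$ on the region where $h\ge 0$, I obtain $h(p)\le \frac{e^{f(0)}}{H}$, which is the claimed bound.

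The step I expect to be the genuine obstacle is the sign bookkeeping in the critical $(1-e^f)$ term, since it relies on correctly pairing the $\nu \frac{\lambda H}{2}$ coefficient from \eqref{he7} against the precise numerical factor in the $2H^2$ entry of \eqref{eqle2}; an error in either Lemma \ref{15} or in the factor of $\frac{\lambda}{2}$ would break the inequality. I would therefore double-check Lemma \ref{15} by differentiating $g(\nu)=\frac{1}{H}e^f\nu$ and substituting \eqref{he6}, \eqref{he7}, and \eqref{he8} before assembling the final estimate, exactly as the proof of Theorem \ref{HET} verified Lemma \ref{13}.
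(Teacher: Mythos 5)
Your overall strategy is exactly the intended one (the paper in fact leaves Theorem \ref{HET2} without a written proof, merely pointing at Lemma \ref{15}), and your endgame is sound: boundary values, the maximum principle, $|\nu|\le 1$ from \eqref{35}, and $e^{f(h(p))}\le e^{f(0)}$ from $f'\le 0$ on the region $h\ge 0$. However, your sign analysis has a genuine gap: you never account for the term $f'\left[\frac{\lambda}{2}-|h_z|^2\right]$ coming from \eqref{he7}. This term is \emph{not} non-negative by itself: by \eqref{he5} and \eqref{35} one has $|h_z|^2=\frac{\lambda}{4}(1-\nu^2)$, hence $f'\left[\frac{\lambda}{2}-|h_z|^2\right]=f'\frac{\lambda}{4}(1+\nu^2)\le 0$, and it is strictly negative wherever $f'<0$. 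This is a structural difference from Theorem \ref{HET}, where the analogous term $-f'(F-|h_z|^2)=-f'\left(F\nu^2-\frac{|\alpha|^2}{D}\right)$ is non-negative on its own via \eqref{e9}; so your stated plan of ``go through each surviving term and argue it is non-negative,'' i.e.\ mirroring Theorem \ref{HET} termwise, breaks precisely on this term.

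The fix is a second pairing of the same kind you already performed for the $H$-terms: combine $f'\frac{\lambda}{4}(1+\nu^2)$ with the piece $\frac{e^f}{H}\cdot\left(-f'H\frac{\lambda}{2}\right)=-e^f f'\frac{\lambda}{2}$ from \eqref{eqle2} --- the very term you instead counted separately as non-negative. Together they give $f'\frac{\lambda}{4}\left[(1+\nu^2)-2e^f\right]\ge 0$, since $1+\nu^2\le 2\le 2e^f$ (using $f\ge 0$) and $f'\le 0$. With this grouping, together with your pairing $\frac{\lambda H\nu}{2}(1-e^f)\ge 0$ and the manifestly non-negative $f''$ and $|p|^2$ contributions, one indeed gets $\Delta_I\big(h+g(\nu)\big)\ge 0$, and the remainder of your argument goes through unchanged. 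In short: two cross-pairings between \eqref{he7} and \eqref{eqle2} are needed, not one, and your write-up only identifies the first.
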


\begin{remark}
We point out that Theorems \ref{HET} and \ref{HET2} were proved in \cite{AEG} and \cite{EGR} when the warping function
f is a constant function.
\end{remark}

Finally, we classify compact minimal graphs when the boundary is contained in a slice, with a certain restriction on the warping function $f$.

\begin{theorem}
Let $\Sigma \looparrowright \mathbb{R} \times_{f} \mathbb{R}^{2} $  be a compact minimal graph defined in a domain of $\mathbb{R}^{2}$. Suppose that $f'$ is nonnegative and that the boundary $\partial \Sigma$ is contained in the slice $\mathbb{R}^{2} \times \{0 \}$. Then $\Sigma$ must be a slice $\{t=0\}$ when $f^\prime(0)=0$ otherwise there is no such minimal surface.
\end{theorem}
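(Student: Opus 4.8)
The plan is to read a minimal immersion as the case $H=0$ of Lemma \ref{hl1}, so that the height function $h$ satisfies a clean elliptic equation in the conformal parameter induced by the first fundamental form. Setting $H=0$ in \eqref{he7} gives $h_{z\overline{z}}=f'\left(\tfrac{\lambda}{2}-|h_z|^2\right)$. To eliminate $|h_z|^2$ I would combine \eqref{he5} with the identity $\|\nabla h\|^2=1-\nu^2$ that follows from \eqref{35} together with $T=\nabla h$; this yields $|h_z|^2=\tfrac{\lambda}{4}(1-\nu^2)$ and hence
$$\Delta_\Sigma h=\frac{4}{\lambda}\,h_{z\overline{z}}=f'\,(1+\nu^2),$$
where $\Delta_\Sigma$ is the Laplace--Beltrami operator of the induced metric. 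The same identity drops out of tracing \eqref{33} over an orthonormal frame and using $\mathrm{tr}(S)=2H=0$, since $\div_\Sigma T=\Delta_\Sigma h$ and $2-\|T\|^2=1+\nu^2$; I would present it this second way as it is coordinate-free.

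Because $f'\ge 0$ and $1+\nu^2\ge 1>0$, this shows that $h$ is subharmonic on $\Sigma$. As $\Sigma$ is compact with $\partial\Sigma\subset\{t=0\}$, we have $h\equiv 0$ on $\partial\Sigma$, so the maximum principle forces $h\le 0$ on all of $\Sigma$; geometrically, $\Sigma$ lies in the half-space $\{t\le 0\}$ and meets the slice $\{t=0\}$ only along its boundary. The strong maximum principle additionally gives that if $h$ attains the value $0$ at an interior point then $h\equiv 0$.

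I would then record the mean curvature of the horizontal slices. From $\overline{\nabla}_X\xi=f'\bigl(X-\langle X,\xi\rangle\xi\bigr)$ (the warped-product Weingarten identity underlying \eqref{33}), the slice $\{t=c\}$ has constant mean curvature $-f'(c)$ with respect to $\xi$; in particular $\{t=0\}$ is itself a minimal graph exactly when $f'(0)=0$. The stated dichotomy then follows from the sharper assertion that such a $\Sigma$ must coincide with $\{t=0\}$: when $f'(0)=0$ the slice is an admissible minimal graph sharing the boundary data of $\Sigma$, so a uniqueness argument identifies $\Sigma$ with it; when $f'(0)>0$ the slice is not minimal, so ``$\Sigma=\{t=0\}$'' is impossible and no such surface remains.

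The decisive step, and the one I expect to be the main obstacle, is upgrading $h\le 0$ to $h\equiv 0$. The subharmonic inequality above is generally strict in the interior, and a strictly subharmonic function may still dip below its boundary values, so the maximum principle by itself is not enough; neither do the local touching/comparison principles against the slices obstruct such dipping (comparing $0=H_\Sigma$ with $-f'(c)$ is consistent in the admissible direction). The natural remedy is a uniqueness/comparison theorem for the Dirichlet problem of the quasilinear elliptic minimal-graph operator attached to $\rrf$: writing $\Sigma$ as $t=u(x,y)$ in divergence form,
$$\div\!\left(\frac{e^{f(u)}\nabla u}{\sqrt{|\nabla u|^2+e^{2f(u)}}}\right)=f'(u)\,e^{f(u)}\,\frac{|\nabla u|^2+2e^{2f(u)}}{\sqrt{|\nabla u|^2+e^{2f(u)}}},$$
and comparing $u$ with the candidate solution $u\equiv 0$. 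Verifying the structure conditions that make this comparison valid — and in particular the fact that $u\equiv 0$ solves the equation \emph{only} when $f'(0)=0$, which is exactly what licenses it as a comparison function — is where the proof has to be made airtight.
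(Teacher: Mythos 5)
Your first half is exactly the paper's argument: setting $H=0$ in \eqref{he7} and combining \eqref{he5} with $\vert T\vert^{2}+\nu^{2}=1$ from \eqref{35} gives $h_{z\overline{z}}=f'\tfrac{\lambda}{4}(1+\nu^{2})$, hence $h$ is subharmonic and the maximum principle yields $h\le 0$; your coordinate-free derivation by tracing \eqref{33} is a clean equivalent route. You are also right to single out the passage from $h\le 0$ to $h\equiv 0$ as the decisive step. The paper disposes of it with the single clause ``Since $h\geq 0$ on $\Sigma$'', i.e.\ it tacitly assumes the graph lies on the non-negative side of the slice; with that reading the proof is immediate ($h\le 0$ and $h\ge 0$ force $h\equiv 0$), and the dichotomy follows because the slice $\{t=0\}$ is minimal exactly when $f'(0)=0$.

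The genuine gap is in your proposed remedy. A comparison/uniqueness theorem for the quasilinear graph equation (your divergence-form equation is correct, incidentally) cannot be pushed through, and not merely because the structure conditions are delicate: the sharpened claim it is meant to prove is false without a one-sidedness hypothesis. Take $f(t)=t$, so $f'\equiv 1\ge 0$; then $\mathbb{R}\times_{f}\mathbb{R}^{2}$ is $\mathbb{H}^{3}$ in horospherical coordinates (put $z=e^{-t}$ in the upper half-space model), the slice $\{t=0\}$ is the horosphere $\{z=1\}$, and the cap of the totally geodesic hemisphere $x^{2}+y^{2}+z^{2}=1+r^{2}$ lying in $\{z\ge 1\}$ is a compact minimal graph $u=-\tfrac12\ln\left(1+r^{2}-x^{2}-y^{2}\right)\le 0$ over the disk of radius $r$, with boundary in the slice, which is not a slice even though $f'(0)=1\neq 0$. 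So no comparison principle against $u\equiv 0$ can hold under the stated hypotheses (the source term $f'(u)e^{f(u)}(\cdots)$ lacks the monotonicity in $u$ that such principles require), and indeed the theorem as literally written fails; the local touching argument you already dismissed is consistent with this, since the dipping occurs in the admissible direction. The missing ingredient is therefore not a PDE comparison theorem but the hypothesis $h\ge 0$ (the graph lies above the slice), which is evidently what the paper means by ``graph'' here and uses without stating; once you add it, your inequality $h\le 0$ already finishes the proof.
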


\begin{proof}
From equations \eqref{he5} with $H = 0$, we have the following.

\begin{eqnarray*}
    h_{z \olz} & = & f'\Big( \frac{\lambda}{2} - \vert h_{z} \vert^{2} \Big) \\
    & = & f'\Big( \frac{\lambda}{2} - \frac{\lambda}{4}(1-\nu^{2})  \Big) \\
    & = & f'\frac{\lambda}{4} \Big(1+\nu^{2} \Big)
\end{eqnarray*}
hence $\Delta_{I}(h) = \frac{f'}{4} (1+\nu^{2}) \geq 0$, furthermore $h=0$ on $\partial \Sigma$. Since $h \geq 0$ on $\Sigma$, then, by the maximum principle, we have $h = 0$ on the whole $\Sigma$. So $\Sigma$ must be a slice.

\end{proof}


\end{document}